\begin{document}


\allowdisplaybreaks


\title[Integrality properties of B\"ottcher coordinates]
      {Integrality properties of B\"ottcher coordinates for one-dimensional superattracting germs}
\date{\today}
\author{Adriana Salerno}
\email{asalerno@bates.edu}
\address{Department of Mathematics, Bates College, Lewiston, ME 04240 USA}
\author[Joseph H. Silverman]{Joseph H. Silverman}
\email{jhs@math.brown.edu}
\address{Mathematics Department, Box 1917
  Brown University, Providence, RI 02912 USA.
  ORCID: https://orcid.org/0000-0003-3887-3248}
\subjclass[2010]{Primary: 37P10; Secondary: 11S82, 37P20}
\keywords{formal power series, B\"ottcher coordinate, superattracting germ, nonarchimedean dynamics}
\thanks{Silverman's research supported by Simons Collaboration Grant \#241309}



\hyphenation{ca-non-i-cal semi-abel-ian}


\newtheorem{theorem}{Theorem}
\newtheorem{lemma}[theorem]{Lemma}
\newtheorem{sublemma}[theorem]{Sublemma}
\newtheorem{conjecture}[theorem]{Conjecture}
\newtheorem{proposition}[theorem]{Proposition}
\newtheorem{corollary}[theorem]{Corollary}
\newtheorem*{claim}{Claim}

\theoremstyle{definition}
\newtheorem*{definition}{Definition}
\newtheorem{example}[theorem]{Example}
\newtheorem{remark}[theorem]{Remark}
\newtheorem{question}[theorem]{Question}

\theoremstyle{remark}
\newtheorem*{acknowledgement}{Acknowledgements}


\newenvironment{notation}[0]{%
  \begin{list}%
    {}%
    {\setlength{\itemindent}{0pt}
     \setlength{\labelwidth}{4\parindent}
     \setlength{\labelsep}{\parindent}
     \setlength{\leftmargin}{5\parindent}
     \setlength{\itemsep}{0pt}
     }%
   }%
  {\end{list}}

\newenvironment{parts}[0]{%
  \begin{list}{}%
    {\setlength{\itemindent}{0pt}
     \setlength{\labelwidth}{1.5\parindent}
     \setlength{\labelsep}{.5\parindent}
     \setlength{\leftmargin}{2\parindent}
     \setlength{\itemsep}{0pt}
     }%
   }%
  {\end{list}}
\newcommand{\Part}[1]{\item[\upshape#1]}

\def\Case#1#2{%
\paragraph{\textbf{\boldmath Case #1: #2.}}\hfil\break\ignorespaces}

\renewcommand{\a}{\alpha}
\renewcommand{\b}{\beta}
\newcommand{\bfbeta}{{\boldsymbol{\beta}}}
\newcommand{\g}{\gamma}
\renewcommand{\d}{\delta}
\newcommand{\e}{\epsilon}
\newcommand{\f}{\varphi}
\newcommand{\bfphi}{{\boldsymbol{\f}}}
\renewcommand{\l}{\lambda}
\renewcommand{\k}{\kappa}
\newcommand{\lhat}{\hat\lambda}
\newcommand{\m}{\mu}
\newcommand{\bfmu}{{\boldsymbol{\mu}}}
\renewcommand{\o}{\omega}
\renewcommand{\r}{\rho}
\newcommand{\rbar}{{\bar\rho}}
\newcommand{\s}{\sigma}
\newcommand{\sbar}{{\bar\sigma}}
\renewcommand{\t}{\tau}
\newcommand{\z}{\zeta}

\newcommand{\D}{\Delta}
\newcommand{\G}{\Gamma}
\newcommand{\F}{\Phi}
\renewcommand{\L}{\Lambda}

\newcommand{\ga}{{\mathfrak{a}}}
\newcommand{\gb}{{\mathfrak{b}}}
\newcommand{\gn}{{\mathfrak{n}}}
\newcommand{\gp}{{\mathfrak{p}}}
\newcommand{\gP}{{\mathfrak{P}}}
\newcommand{\gq}{{\mathfrak{q}}}

\newcommand{\Abar}{{\bar A}}
\newcommand{\Ebar}{{\bar E}}
\newcommand{\kbar}{{\bar k}}
\newcommand{\Kbar}{{\bar K}}
\newcommand{\Pbar}{{\bar P}}
\newcommand{\Sbar}{{\bar S}}
\newcommand{\Tbar}{{\bar T}}
\newcommand{\gbar}{{\bar\gamma}}
\newcommand{\lbar}{{\bar\lambda}}
\newcommand{\ybar}{{\bar y}}
\newcommand{\phibar}{{\bar\f}}

\newcommand{\Acal}{{\mathcal A}}
\newcommand{\Bcal}{{\mathcal B}}
\newcommand{\Ccal}{{\mathcal C}}
\newcommand{\Dcal}{{\mathcal D}}
\newcommand{\Ecal}{{\mathcal E}}
\newcommand{\Fcal}{{\mathcal F}}
\newcommand{\Gcal}{{\mathcal G}}
\newcommand{\Hcal}{{\mathcal H}}
\newcommand{\Ical}{{\mathcal I}}
\newcommand{\Jcal}{{\mathcal J}}
\newcommand{\Kcal}{{\mathcal K}}
\newcommand{\Lcal}{{\mathcal L}}
\newcommand{\Mcal}{{\mathcal M}}
\newcommand{\Ncal}{{\mathcal N}}
\newcommand{\Ocal}{{\mathcal O}}
\newcommand{\Pcal}{{\mathcal P}}
\newcommand{\Qcal}{{\mathcal Q}}
\newcommand{\Rcal}{{\mathcal R}}
\newcommand{\Scal}{{\mathcal S}}
\newcommand{\Tcal}{{\mathcal T}}
\newcommand{\Ucal}{{\mathcal U}}
\newcommand{\Vcal}{{\mathcal V}}
\newcommand{\Wcal}{{\mathcal W}}
\newcommand{\Xcal}{{\mathcal X}}
\newcommand{\Ycal}{{\mathcal Y}}
\newcommand{\Zcal}{{\mathcal Z}}

\renewcommand{\AA}{\mathbb{A}}
\newcommand{\BB}{\mathbb{B}}
\newcommand{\CC}{\mathbb{C}}
\newcommand{\FF}{\mathbb{F}}
\newcommand{\GG}{\mathbb{G}}
\newcommand{\NN}{\mathbb{N}}
\newcommand{\PP}{\mathbb{P}}
\newcommand{\QQ}{\mathbb{Q}}
\newcommand{\RR}{\mathbb{R}}
\newcommand{\ZZ}{\mathbb{Z}}

\newcommand{\bfa}{{\boldsymbol a}}
\newcommand{\bfb}{{\boldsymbol b}}
\newcommand{\bfc}{{\boldsymbol c}}
\newcommand{\bfd}{{\boldsymbol d}}
\newcommand{\bfe}{{\boldsymbol e}}
\newcommand{\bff}{{\boldsymbol f}}
\newcommand{\bfg}{{\boldsymbol g}}
\newcommand{\bfi}{{\boldsymbol i}}
\newcommand{\bfj}{{\boldsymbol j}}
\newcommand{\bfn}{{\boldsymbol n}}
\newcommand{\bfp}{{\boldsymbol p}}
\newcommand{\bfr}{{\boldsymbol r}}
\newcommand{\bfs}{{\boldsymbol s}}
\newcommand{\bft}{{\boldsymbol t}}
\newcommand{\bfu}{{\boldsymbol u}}
\newcommand{\bfv}{{\boldsymbol v}}
\newcommand{\bfw}{{\boldsymbol w}}
\newcommand{\bfx}{{\boldsymbol x}}
\newcommand{\bfy}{{\boldsymbol y}}
\newcommand{\bfz}{{\boldsymbol z}}
\newcommand{\bfA}{{\boldsymbol A}}
\newcommand{\bfF}{{\boldsymbol F}}
\newcommand{\bfB}{{\boldsymbol B}}
\newcommand{\bfD}{{\boldsymbol D}}
\newcommand{\bfG}{{\boldsymbol G}}
\newcommand{\bfI}{{\boldsymbol I}}
\newcommand{\bfM}{{\boldsymbol M}}
\newcommand{\bfP}{{\boldsymbol P}}
\newcommand{\bfzero}{{\boldsymbol{0}}}
\newcommand{\bfone}{{\boldsymbol{1}}}

\newcommand{\Aut}{\operatorname{Aut}}
\newcommand{\Berk}{{\textup{Berk}}}
\newcommand{\Birat}{\operatorname{Birat}}
\newcommand{\codim}{\operatorname{codim}}
\newcommand{\Crit}{\operatorname{Crit}}
\newcommand{\critwt}{\operatorname{critwt}} 
\newcommand{\diag}{\operatorname{diag}}
\newcommand{\Disc}{\operatorname{Disc}}
\newcommand{\Div}{\operatorname{Div}}
\newcommand{\Dom}{\operatorname{Dom}}
\newcommand{\End}{\operatorname{End}}
\newcommand{\Fbar}{{\bar{F}}}
\newcommand{\Fix}{\operatorname{Fix}}
\newcommand{\Gal}{\operatorname{Gal}}
\newcommand{\GITQuot}{/\!/}
\newcommand{\GL}{\operatorname{GL}}
\newcommand{\GR}{\operatorname{\mathcal{G\!R}}}
\newcommand{\Hom}{\operatorname{Hom}}
\newcommand{\Index}{\operatorname{Index}}
\newcommand{\Image}{\operatorname{Image}}
\newcommand{\Isom}{\operatorname{Isom}}
\newcommand{\hhat}{{\hat h}}
\newcommand{\Ker}{{\operatorname{ker}}}
\newcommand{\Lift}{\operatorname{Lift}}
\newcommand{\limstar}{\lim\nolimits^*}
\newcommand{\limstarn}{\lim_{\hidewidth n\to\infty\hidewidth}{\!}^*{\,}}
\newcommand{\Mat}{\operatorname{Mat}}
\newcommand{\maxplus}{\operatornamewithlimits{\textup{max}^{\scriptscriptstyle+}}}
\newcommand{\MOD}[1]{~(\textup{mod}~#1)}
\newcommand{\Mor}{\operatorname{Mor}}
\newcommand{\Moduli}{\mathcal{M}}
\newcommand{\Norm}{{\operatorname{\mathsf{N}}}}
\newcommand{\notdivide}{\nmid}
\newcommand{\normalsubgroup}{\triangleleft}
\newcommand{\NS}{\operatorname{NS}}
\newcommand{\onto}{\twoheadrightarrow}
\newcommand{\ord}{\operatorname{ord}}
\newcommand{\Orbit}{\mathcal{O}}
\newcommand{\Pcase}[3]{\par\noindent\framebox{$\boldsymbol{\Pcal_{#1,#2}}$}\enspace\ignorespaces}
\newcommand{\Per}{\operatorname{Per}}
\newcommand{\Perp}{\operatorname{Perp}}
\newcommand{\PrePer}{\operatorname{PrePer}}
\newcommand{\PGL}{\operatorname{PGL}}
\newcommand{\Pic}{\operatorname{Pic}}
\newcommand{\Prob}{\operatorname{Prob}}
\newcommand{\Proj}{\operatorname{Proj}}
\newcommand{\Qbar}{{\bar{\QQ}}}
\newcommand{\rank}{\operatorname{rank}}
\newcommand{\Rat}{\operatorname{Rat}}
\newcommand{\Resultant}{\operatorname{Res}}
\renewcommand{\setminus}{\smallsetminus}
\newcommand{\sgn}{\operatorname{sgn}} 
\newcommand{\shafdim}{\operatorname{ShafDim}}
\newcommand{\SL}{\operatorname{SL}}
\newcommand{\Span}{\operatorname{Span}}
\newcommand{\Spec}{\operatorname{Spec}}
\renewcommand{\ss}{\textup{ss}}
\newcommand{\stab}{\textup{stab}}
\newcommand{\Stab}{\operatorname{Stab}}
\newcommand{\Support}{\operatorname{Supp}}
\newcommand{\TableLoopSpacing}{{\vrule height 15pt depth 10pt width 0pt}} 
\newcommand{\tors}{{\textup{tors}}}
\newcommand{\Trace}{\operatorname{Trace}}
\newcommand{\trianglebin}{\mathbin{\triangle}} 
\newcommand{\tr}{{\textup{tr}}} 
\newcommand{\UHP}{{\mathfrak{h}}}    
\newcommand{\val}{\operatorname{val}} 
\newcommand{\wt}{\operatorname{wt}} 
\newcommand{\<}{\langle}
\renewcommand{\>}{\rangle}

\newcommand{\pmodintext}[1]{~\textup{(mod}~#1\textup{)}}
\newcommand{\ds}{\displaystyle}
\newcommand{\longhookrightarrow}{\lhook\joinrel\longrightarrow}
\newcommand{\longonto}{\relbar\joinrel\twoheadrightarrow}
\newcommand{\SmallMatrix}[1]{%
  \left(\begin{smallmatrix} #1 \end{smallmatrix}\right)}

\newcommand{\SD}{\operatorname{\mathcal{SD}}}  
\newcommand{\MD}{\operatorname{\mathcal{MD}}}  


\begin{abstract}
Let $R$ be a ring of characteristic $0$ with field of fractions $K$, and let $m\ge2$.  The B\"ottcher coordinate of a power series $\varphi(x)\in x^m + x^{m+1}R[\![x]\!]$ is the unique power series $f_\varphi(x)\in x+x^2K[\![x]\!]$ satisfying $\varphi\circ f_\varphi(x) = f_\varphi(x^m)$.  In this paper we study the integrality properties of the coefficients of $f_\varphi(x)$, partly for their intrinsic interest and partly for potential applications to $p$-adic dynamics. Results include: (1) If $p$ is prime and $R=\mathbb Z_p$ and $\varphi(x)\in x^p + px^{p+1}R[\![x]\!]$, then $f_\varphi(x)\in R[\![x]\!]$.  (2) If $\varphi(x)\in x^m + mx^{m+1}R[\![x]\!]$, then $f_\varphi(x)=x\sum_{k=0}^\infty a_kx^k/k!$ with all $a_k\in R$.  (3) In (2), if $m=p^2$, then $a_k\equiv-1\pmodintext{p}$ for all $k$ that are powers of $p$.
\end{abstract}



\maketitle


\section{Introduction}
\label{section:introduction}

The following well-known result is essentially due to
B{\"o}ttcher~\cite{Bottcher}.

\begin{proposition}
\label{proposition:fphidef}
Let~$K$ be a field of characteristic~$0$, and let $m\ge2$.
Let
\[
  \f(x) \in x^m + x^{m+1}K[\![x]\!]
\]
be a power series of the indicated form. Then there is a unique formal power
series $f_\f(x)\in x+x^2K[\![x]\!]$ satisfying
\begin{equation}
  \label{eqn:bottdef}
  \f\circ f_\f(x) = f_\f(x^m).
\end{equation}
\end{proposition}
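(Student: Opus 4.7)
The plan is to determine the coefficients of $f_\f(x) = x + \sum_{n \geq 2} c_n x^n$ one at a time by comparing the coefficients of $x^{m+k}$ on both sides of $\f \circ f_\f(x) = f_\f(x^m)$ for $k = 1, 2, 3, \ldots$ in succession. Write $\f(x) = x^m + \sum_{j \geq m+1} b_j x^j$ with $b_j \in K$. I would show that each successive coefficient equation takes the form $m c_{k+1} = (\text{expression in previously determined data})$, which uniquely determines $c_{k+1}$ because $m$ is invertible in the characteristic-zero field $K$.

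The crucial step is a ``triangularity'' calculation. The multinomial expansion of $f_\f(x)^m = (x + c_2 x^2 + \cdots)^m$ shows that the coefficient of $x^{m+k}$ equals $m c_{k+1} + P_k(c_2, \ldots, c_k)$, where $P_k \in \ZZ[y_2, \ldots, y_k]$ has nonnegative integer coefficients. Indeed, a product $c_{j_1} \cdots c_{j_m}$ (with the convention $c_1 = 1$) that contributes to $x^{m+k}$ must satisfy $j_1 + \cdots + j_m = m+k$, and among such products the largest possible index is $k+1$, achieved only when one $j_i = k+1$ and the other $m-1$ indices equal $1$, producing the linear term $m c_{k+1}$. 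A similar size count shows that for each $j \geq m+1$, the contribution of $b_j f_\f(x)^j$ to the coefficient of $x^{m+k}$ involves only $c_2, \ldots, c_k$. On the right-hand side, $f_\f(x^m) = x^m + \sum_{n \geq 2} c_n x^{mn}$ contributes to the coefficient of $x^{m+k}$ only when $m \mid k$, in which case the contribution is $c_{(k/m)+1}$ with $(k/m)+1 \leq k$, again involving only lower-indexed coefficients.

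Equating the $x^{m+k}$ coefficients thus yields a relation of the shape
\[
m c_{k+1} \;=\; R_k\bigl(c_2, \ldots, c_k;\, b_{m+1}, \ldots, b_{m+k}\bigr)
\]
for a polynomial $R_k$ with integer coefficients. Because $m \neq 0$ in $K$, this solves uniquely for $c_{k+1}$, and induction on $k \geq 1$ produces a unique sequence $(c_n)_{n \geq 2} \subset K$, hence a unique formal power series $f_\f \in x + x^2 K[\![x]\!]$ satisfying the functional equation. The only real work is the triangularity verification of the second paragraph; once that is in hand, the existence and uniqueness assertions follow automatically from the recursion, with the characteristic-zero hypothesis entering solely through the invertibility of $m$.
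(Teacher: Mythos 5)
Your argument is correct, and it is exactly the first of the two standard approaches the paper itself indicates (the paper only sketches that one can ``use the B\"ottcher equation to construct a recursion that defines each coefficient of $f_\f(x)$ in terms of the earlier coefficients''). Your triangularity verification --- that the $x^{m+k}$ coefficient of $f_\f(x)^m$ is $mc_{k+1}$ plus a polynomial in $c_2,\ldots,c_k$, that the $b_j f_\f(x)^j$ terms with $j\ge m+1$ and the right-hand side $f_\f(x^m)$ involve only lower-indexed $c_n$, and that $m\ne 0$ in $K$ makes the recursion solvable --- correctly supplies the details the paper omits.
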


There are two standard ways to prove
Proposition~\ref{proposition:fphidef}.  First, one can use the
B\"ottcher equation~\eqref{eqn:bottdef} to construct a recursion that
defines each coefficient of~$f_\f(x)$ in terms of the earlier
coefficients and the coefficients of~$\f(x)$. Second, one can show
that
\begin{equation}
\label{eqn:bottaslim}
  f_\f(x) := \lim_{n\to\infty} \bigl(\f^{\circ n}(x)\bigr)^{1/m^n}
  \quad\text{converges in $K[\![x]\!]$.}
\end{equation}

\begin{definition}
The series~$f_\f(x)\in K[\![x]\!]$ uniquely determined
by~\eqref{eqn:bottdef} is called the (\emph{local}) \emph{B{\"o}ttcher
  coordinate} for the series~$\f(x)$.
\end{definition}

For~$K=\CC$, B{\"o}ttcher proved that if~$\f(x)$ is analytic at~$0$,
then~$f_\f(x)\in\CC[\![x]\!]$ converges on a neighborhood of~$0$, and thus
gives a local complex analytic conjugacy between~$\f(x)$ and~$x^m$.
See~\cite[Chapter~9]{MR2193309}, for example, for a discussion of
B\"ottcher coordinates over~$\CC$.  In this paper we are interested in
the convergence properties of the series~$f_\f$ in the case that~$K$
is a non-archimedean field, or alternatively, we want to study the
integrality properties of the coefficients of the B\"ottcher
coordinate.  Suppose that~$\f(x)\in R[\![x]\!]$ has coefficients in a
ring~$R$.  If we further assume that the ramification degree~$m$ is
invertible in~$R$, then the coefficients of~$f_\f(x)$ are
quite well-behaved, as in the following result.

\begin{proposition}
\label{proposition:botmprimetop}
Let~$R$ be a ring,
let~$m\ge2$ be an integer satisfying~$m\in R^*$, and let
\begin{equation}
  \label{eqn:phixxmxm1Rxx}
  \f(x) \in x^m +  x^{m+1}R[\![x]\!]
\end{equation}
be a power series of the indicated form.  Then both the B\"ottcher
coordinate~$f_\f(x)$ and its inverse series~$f_\f^{-1}(x)$ are in
$R[\![x]\!]$.
\end{proposition}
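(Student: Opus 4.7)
The plan is to use the recursive approach to constructing $f_\f$ alluded to in the discussion after Proposition~\ref{proposition:fphidef}, and then to verify that, under the hypothesis $m\in R^*$, each recursive step produces a coefficient in~$R$. Write
\[
  \f(x) = x^m + \sum_{j\ge1} b_j x^{m+j}, \qquad b_j\in R,
\]
and look for~$f_\f(x) = x + \sum_{n\ge2} c_n x^n$ satisfying the B\"ottcher equation~\eqref{eqn:bottdef}. The strategy is to determine~$c_n$ inductively by equating the coefficients of $x^{m+n-1}$ on the two sides of \eqref{eqn:bottdef}; the choice of degree $m+n-1$ is designed so that~$c_n$ appears with coefficient~$m$ on the left and so that no~$c_k$ with $k\ge n$ appears on the right.

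On the left side, expand
\[
  \f\bigl(f_\f(x)\bigr) = f_\f(x)^m + \sum_{j\ge1} b_j f_\f(x)^{m+j}.
\]
Since $f_\f(x)=x+\sum_{k\ge2}c_kx^k$, the only way the monomial~$c_n$ enters the coefficient of $x^{m+n-1}$ in $f_\f(x)^m$ is by choosing one factor to be $c_n x^n$ and the remaining $m-1$ factors to be~$x$, contributing $mc_n$; every other contribution from $f_\f(x)^m$ is a polynomial in $c_2,\dots,c_{n-1}$ with coefficients in~$\ZZ$. For $j\ge1$, the series $b_j f_\f(x)^{m+j}$ starts in degree $m+j$, so its coefficient of $x^{m+n-1}$ is $b_j$ times a polynomial in $c_2,\dots,c_{n-j}$ with integer coefficients, involving no~$c_k$ with $k\ge n$. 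Hence the coefficient of $x^{m+n-1}$ on the left has the form $mc_n + A_n$, where $A_n\in R[c_2,\dots,c_{n-1}]$ is a polynomial with integer coefficients in~$b_1,\dots,b_{n-1}$ and~$c_2,\dots,c_{n-1}$.

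On the right side, $f_\f(x^m) = x^m + \sum_{k\ge2} c_k x^{mk}$, so the coefficient of $x^{m+n-1}$ is either~$0$ (if $m\nmid n-1$) or equal to~$c_{k}$ for the unique $k\ge2$ with $mk = m+n-1$; in the latter case $k = 1 + (n-1)/m < n$. Equating the two sides yields
\[
  m c_n = B_n - A_n,
\]
where $B_n\in\{0\}\cup\{c_2,\dots,c_{n-1}\}$. Since $m\in R^*$, we may solve $c_n = m^{-1}(B_n-A_n)$, and an immediate induction on~$n$ (starting from~$c_1=1$) shows $c_n\in R$. This proves $f_\f(x)\in R[\![x]\!]$.

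For the inverse series, once we know $f_\f(x)\in x+x^2R[\![x]\!]$, its leading coefficient~$1$ is a unit in~$R$, so the usual recursive construction of the compositional inverse over any commutative ring produces $f_\f^{-1}(x)\in x+x^2R[\![x]\!]$. The main (minor) obstacle in the argument above is just the bookkeeping to confirm that equating coefficients of $x^{m+n-1}$ genuinely isolates~$c_n$ linearly with coefficient~$m$, with no higher-index~$c_k$ sneaking in from either side; once that combinatorial point is checked, the hypothesis $m\in R^*$ does all the work.
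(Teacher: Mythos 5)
Your proof is correct, and it takes a route that is genuinely different in detail from the one the paper gestures at. The paper's one-line proof says to imitate the argument for Theorem~\ref{theorem:compm}, which works with the \emph{inverse} series $F = f_\f^{-1}$ via the functional equation $F(\f(x)) = F(x)^m$, takes $k$'th derivatives at $0$ using Fa\`a di Bruno (Lemma~\ref{lemma:dibruno}) and Lemma~\ref{lemma:Dxkymstrong} to extract the factor of $m$, and then passes back to $f_\f$ via Proposition~\ref{proposition:finverseform}. You instead work directly with $f_\f$ and the forward equation $\f\circ f_\f = f_\f(x^m)$, doing raw coefficient-matching at degree $m+n-1$; the degree-counting that isolates $c_n$ linearly with coefficient exactly $m$ (one factor $c_nx^n$, the other $m-1$ factors contributing $x$, and nothing of index $\ge n$ surviving in the $b_j$-terms or on the right side) is checked correctly. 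Your approach is the more elementary of the two for this particular proposition: it needs no derivative calculus, no Fa\`a di Bruno, and no separate inverse-series lemma beyond the standard fact that a series $x+\cdots\in R[\![x]\!]$ with unit leading coefficient has a compositional inverse in $R[\![x]\!]$. The trade-off is that the paper's inverse-coordinate/derivative machinery is designed to track the precise powers of $m$ and factorials in the denominators for the harder Theorem~\ref{theorem:compm}, where $m$ is not assumed to be a unit; your direct recursion establishes integrality cleanly when $m\in R^*$ but would require additional bookkeeping to reproduce the sharp $m^kk!$ denominator bounds of the more general theorem.
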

\begin{proof}
This is well-known, cf.\ \cite{MR3064415}. It follows easily via
an induction argument similar to the proof of Theorem~\ref{theorem:compm}.
\end{proof}

The coefficients of the B\"ottcher coordinate become more complicated,
and much more interesting, when the ramification degree~$m$ is not
invertible in~$R$.  Our first main result gives a general bound for
the denominators of the coefficients of the B\"ottcher coordinate for
maps of the form~\eqref{eqn:phixxmxm1Rxx} without the assumption
that~$m$ is invertible in~$R$. We also give a better bound if a few of
the non-leading coefficients of~$\f(x)$ have some additional
$m$-divisibility.

\begin{theorem}
\label{theorem:compm}
Let~$R$ be a ring of characteristic~$0$,
let~$m\ge2$ be an integer, and let
\[
  \f(x) = x^m \sum_{k=0}^\infty b_kx^k \in R[\![x]\!]\quad\text{with $b_0=1$.}
\]
\vspace{-10pt}  
\begin{parts}
\Part{(a)}
Both the B\"ottcher coordinate~$f_\f(x)$ and its
inverse~$f_\f^{-1}(x)$ are series of the form
\[
  x \sum_{k=0}^\infty \frac{a_k}{m^k k!}x^k \quad\text{with $a_0=1$ and $a_k\in R$ for all $k$.}
\]
\Part{(b)}  
Suppose further that the coefficients of~$\f$ satisfy
\[
  k!b_k \in m R\quad\text{for $1\le k< m$.}
\]
For example, this is true if $\f(x)\in x^m+mx^{m+1}R[\![x]\!]$.
Then the B\"ottcher coordinate~$f_\f(x)$ and its
inverse~$f_\f^{-1}(x)$ are series of the form
\[
  x\sum_{k=0}^\infty \frac{a_k}{k!}x^k \quad\text{with $a_0=1$ and $a_k\in R$ for all $k$.}
\]
\end{parts}
\end{theorem}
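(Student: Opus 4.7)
The plan is to prove both parts by a single induction on $n$, using the recursion derived from $\varphi(f_\varphi(x)) = f_\varphi(x^m)$. Writing $f_\varphi(x) = x(1+g(x))$ with $g(x) = \sum_{k \ge 2} c_k x^{k-1}$ and $\psi(y) := \varphi(y)/y^m = \sum_{j \ge 0} b_j y^j$, and isolating the unique $m c_n$ contribution from the linear part of $(1+g)^m$ in the coefficient of $x^{n+m-1}$, one obtains
\[
  m c_n \;=\; -\sum_{\ell \ge 2} \binom{m}{\ell}\, [x^{n-1}] g(x)^\ell \;-\; \sum_{j=1}^{n-1} b_j\, [x^{n-j-1}](1+g(x))^{m+j} \;+\; \epsilon_n\, c_{(n-1)/m+1},
\]
where $\epsilon_n = 1$ if $m \mid n-1$ and $0$ otherwise, and every $c_{k_i}$ appearing on the right has index $< n$.

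For part~(a), multiply through by $m^{n-2}(n-1)!$ and substitute the inductive hypothesis $c_{k_i} = a_{k_i-1}/(m^{k_i-1}(k_i-1)!)$ with $a_{k_i-1}\in R$. A typical product $c_{k_1}\cdots c_{k_\ell}$ with $\sum_i(k_i-1) = n-j-1$ becomes $m^{j-1}\cdot j!\binom{n-1}{j}\binom{n-j-1}{k_1-1,\dots,k_\ell-1}\prod_i a_{k_i-1}$, which lies in $R$ for $j \ge 1$, and a similar direct check handles the $\epsilon_n c_{s+1}$ term (with $s=(n-1)/m$). The main obstacle is the $j=0,\ell\ge2$ contribution, which is short exactly one factor of $m$. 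After grouping ordered tuples by their underlying multiset of values $j_i := k_i-1$ (with multiplicities $e_v$ for each distinct value $v$), the coefficient of $\prod_i a_{j_i}$ becomes
\[
  \frac{(m-1)!\,(n-1)!}{(m-\ell)!\,\prod_i j_i!\,\prod_v e_v!};
\]
to see this is an integer, multiply by $m$ to obtain the count of functions $[n-1]\to[m]$ whose multiset of non-empty fiber sizes equals $\{j_1,\dots,j_\ell\}$, and observe that $\ZZ/m$ acts freely on such functions by rotation of the codomain (which preserves the profile). Since $n-1\ge 1$, every orbit has size exactly $m$, so $m$ divides the count, finishing part~(a).

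For part~(b) one instead needs that $(n-1)!/m$ times the right-hand side of the recursion lies in $R$. The $j=0,\ell\ge2$ contribution is handled identically by the same cyclic-orbit argument. A summand with $j \ge 1$ rearranges as $(j!\,b_j/m)\cdot\binom{m+j}{\ell}\binom{n-1}{j}\binom{n-j-1}{k_1-1,\dots,k_\ell-1}\prod_i a_{k_i-1}$, and the factor $j!\,b_j/m$ lies in $R$ either by the hypothesis $j!\,b_j \in mR$ (for $1\le j<m$) or because $m \mid j!$ (for $j\ge m$). The $\epsilon_n c_{s+1}$ term reduces to checking $(ms)!/(m\cdot s!) \in \ZZ$, which holds because one of $s+1,\dots,ms$ is a multiple of $m$. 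The same induction applied to $h = f_\varphi^{-1}$ (which satisfies $h(\varphi(y))=h(y)^m$, yielding a recursion of the same shape) gives the corresponding statements for the inverse series.
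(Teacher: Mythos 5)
Your proof is correct, and it takes a genuinely different route from the paper.

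The paper works throughout with the \emph{inverse} series $F=f_\f^{-1}$ via $F(\f(x))=F(x)^m$, derives the recursion by taking $k$'th derivatives at $0$ and invoking a structure lemma (Lemma~\ref{lemma:Dxkymstrong}) asserting that $D^k_x(y^m)=my^{m-1}D^k_xy+m\D_{m,k}(y,\ldots,D^{k-1}_xy)$ with $\D_{m,k}$ integral, and then transfers the conclusion to $f_\f$ itself using the separate inverse-closure result Proposition~\ref{proposition:finverseform}(c,d), whose proof in turn rests on a $\ZZ/n\ZZ$-rotation argument on pointed partitions (Lemma~\ref{lemma:combin}). You instead extract coefficients directly from $\f\circ f_\f = f_\f(x^m)$, and your key combinatorial input is a $\ZZ/m$-rotation argument on the \emph{codomain} $[m]$ of surjection-profiles of $[n-1]$: rotating a function $[n-1]\to[m]$ is free because $n-1\ge1$ forces some fiber to be non-empty, so $m$ divides the relevant count. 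It is pleasing that both proofs ultimately come down to a free cyclic action, but acting on different sets ($\ZZ/n\ZZ$ pointed partitions vs.\ $\ZZ/m$ on the target), and your version avoids the Fa\`a di Bruno/derivative machinery and Proposition~\ref{proposition:finverseform} entirely.

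One small caveat on the last paragraph: the recursion for $h=f_\f^{-1}$, namely $md_n=b_{n-1}+\sum_{\ell\ge2,\;m(\ell-1)\le n-1}d_\ell\,[y^{n-1-m(\ell-1)}]\psi(y)^\ell-\sum_{\ell\ge2}\binom{m}{\ell}[y^{n-1}]G(y)^\ell$, is not literally ``of the same shape'' as the one for $f_\f$: the $b$-coefficients now couple to low-index $d_\ell$ with $\ell\le1+(n-1)/m$ rather than to $(1+g)^{m+j}$. The $j=0$ (i.e.\ the $\binom{m}{\ell}G^\ell$) block is identical and your orbit argument applies verbatim, but the other terms need their own check. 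It does go through: for part~(b) the isolated $b_{n-1}$ term is handled by the hypothesis $k!b_k\in mR$ (for $n-1<m$) or by $m\mid(n-1)!$ (for $n-1\ge m$), and the $d_\ell$ terms need $m\mid(n-1)!/(\ell-1)!=\ell(\ell+1)\cdots(n-1)$, which holds because $m(\ell-1)$ is itself a multiple of $m$ lying in $\{\ell,\ldots,n-1\}$ whenever $\ell\ge2$ and $m(\ell-1)\le n-1$. So the conclusion is right, but that sentence glosses over a step that deserves a line or two of verification.
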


A special case of Theorem~\ref{theorem:compm}(b) says that if
$\f(x)\in x^m + mx^{m+1}R[\![x]\!]$, then the B\"ottcher
coordinate~$f_\f(x)$ has the form $x\sum a_kx^k/k!$ with $a_k\in R$.
It turns out that if~$m$ is prime, then we can often do much better,
as shown in the following somewhat surprising result.

\begin{theorem}
\label{theorem:fphiinRp}
Let~$R$ be a ring of characteristic~$0$ with fraction field~$K$, and
let~$p$ be a prime such that $a^p\equiv a\pmodintext{pR}$ for
all~$a\in R$. For example,~$R$ could be~$\ZZ$ or~$\ZZ_p$. Let
\[
  \f(x) \in x^p + p x^{p+1}R[\![x]\!]
\]
be a power series of the indicated form. Then the B\"ottcher
coordinate and its inverse satisfy
\[
  f_\f(x)\in R[\![x]\!]\quad\text{and}\quad f_\f^{-1}(x)\in R[\![x]\!].
\]
\end{theorem}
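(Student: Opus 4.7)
The plan is to prove $f_\varphi \in R[\![x]\!]$ by a direct induction on the coefficients of $f_\varphi$, exploiting the Frobenius-type identity that the hypothesis $a^p \equiv a \pmod{pR}$ supplies for power series with $R$-coefficients. The integrality of the inverse $f_\varphi^{-1}$ then follows from a standard formal-series fact.

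First, I would rewrite the B\"ottcher equation in a convenient form. Writing $\varphi(x) = x^p + p\,\psi(x)$ with $\psi(x) \in x^{p+1}R[\![x]\!]$, the defining relation $\varphi\circ f_\varphi(x) = f_\varphi(x^p)$ becomes
\[
f_\varphi(x^p) - f_\varphi(x)^p = p\,\psi(f_\varphi(x)).
\]
Expand $f_\varphi(x) = \sum_{n\ge1} c_n x^n$ with $c_1=1$, assume by induction that $c_1,\dots,c_{n-1}\in R$ for some $n\ge2$, and set $\tilde{f}(x)=\sum_{k=1}^{n-1}c_kx^k\in R[x]$. Extracting the coefficient of $x^{n+p-1}$ from each side, a direct degree count shows that $c_n$ enters $[x^{n+p-1}]f_\varphi(x)^p$ only through the $p$ products in which exactly one of the $p$ factors contributes $c_nx^n$ and the remaining $p-1$ factors each contribute $c_1 x$: a single factor of degree $>n$, or two factors each of degree $\ge n$, would push the total beyond $n+p-1$ whenever $n\ge2$. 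Hence
\[
[x^{n+p-1}]f_\varphi(x)^p = [x^{n+p-1}]\tilde{f}(x)^p + pc_n,
\]
and similarly $[x^{n+p-1}]f_\varphi(x^p) = [x^{n+p-1}]\tilde{f}(x^p)$, since every missing term in $f_\varphi(x^p)-\tilde{f}(x^p)$ has degree $\ge pn>n+p-1$. Moreover $[x^{n+p-1}]\psi(f_\varphi(x))$ involves only $c_1,\dots,c_{n-1}$ (because $\psi$ begins at degree $p+1$, which forces every contributing index to be at most $n-1$), so it lies in $R$ by induction. Combining these yields
\[
pc_n \;=\; [x^{n+p-1}]\bigl(\tilde{f}(x^p)-\tilde{f}(x)^p\bigr) \;-\; p\,[x^{n+p-1}]\psi(f_\varphi(x)).
\]

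The decisive step is the Frobenius identity: the multinomial expansion together with the hypothesis $a^p\equiv a\pmod{pR}$ gives $g(x^p)\equiv g(x)^p\pmod{pR[\![x]\!]}$ for every $g\in R[\![x]\!]$. Applied to $\tilde{f}$, this forces the right-hand side of the previous display into $pR$, so $pc_n\in pR$; because $R$ has characteristic $0$, we conclude $c_n\in R$, completing the induction (the base case $c_1=1$ is immediate). For the inverse, $f_\varphi(x)=x+x^2(\cdots)\in R[\![x]\!]$ has unit linear coefficient, and the coefficients of its compositional inverse are universal integer polynomials in those of $f_\varphi$, so $f_\varphi^{-1}\in R[\![x]\!]$ automatically.

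The main obstacle is that one cannot simply reduce the B\"ottcher equation modulo $p$ to invoke Frobenius directly: that approach is circular, since it already presupposes the very conclusion $f_\varphi\in R[\![x]\!]$ we are trying to establish. The resolution is to Frobenize not $f_\varphi$ but the already-integral truncation $\tilde{f}$, and to choose the exponent $x^{n+p-1}$ precisely so that (i) the new coefficient $c_n$ is isolated on one side with multiplier exactly $p$, and (ii) replacing $f_\varphi$ by $\tilde{f}$ inside $x\mapsto x^p$ costs nothing at this particular degree.
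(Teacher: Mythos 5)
Your proposal is correct and takes essentially the same approach as the paper's proof: both isolate the new coefficient by extracting the coefficient of $x^{n+p-1}$ (equivalently, $x^{p+k}$ in the paper's indexing) from the B\"ottcher equation, observe that the only contribution of the unknown coefficient appears with multiplier exactly $p$, and then invoke the Frobenius congruence $F(x^p)\equiv F(x)^p\pmod{pR[x]}$ applied to the already-integral truncation to absorb everything into $pR$.
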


We've already noted that Theorem~\ref{theorem:fphiinRp}, which deals
with the case that $m=p$ is prime, is much stronger than
Theorem~\ref{theorem:compm}(b), which deals with the case that~$m$ is
composite.  The proof of Theorem~\ref{theorem:fphiinRp} relies on
Fermat's little theorem, so one might suppose that
Theorem~\ref{theorem:compm}(b) could be strengthened by using
the congruence
\[
  (a+b)^m \cong (a^p+b^p)^{m/p} \pmod{pR},
\]
which valid for $p\mid m$. However, this is not the case, as shown by the
following result, whose proof Section~\ref{section:nonint} is a
complicated induction on the coefficients of the B\"ottcher
coordinate.

\begin{theorem}
\label{theorem:radiusp2}
Let $\f(x)=x^{p^2}+p^2 x^{p^2+1}$, let~$f_\f(x)$ be the B\"ottcher
coordinate for~$\f$, and write~$f_\f(x)$ as
\[
  f_\f(x) = x\sum_{k=0}^\infty \frac{a_k}{k!}x^k,
\]
where~$a_k\in\ZZ$ from Theorem~$\ref{theorem:compm}(b)$.
Then for all~$k$ that are powers of~$p$, we have
\[
  a_k \equiv -1 \pmod{p}.
\]
\end{theorem}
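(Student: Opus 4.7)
My plan is to linearize the problem by introducing $h(x):=\log g(x)$, where $f_\f(x)=xg(x)$ and $g(x)=\sum_{k\ge 0}a_kx^k/k!$. Since $a_k\in\ZZ$ by Theorem~\ref{theorem:compm}(b), the EGF recursion coming from $g'=gh'$ forces $h(x)=\sum_{k\ge 1}d_kx^k/k!$ with all $d_k\in\ZZ$. Taking the logarithm of the Böttcher equation $g(x)^{p^2}(1+p^2xg(x))=g(x^{p^2})$ yields
\[
h(x^{p^2})-p^2h(x)=\log\bigl(1+p^2xg(x)\bigr)=\sum_{k\ge 1}\frac{(-1)^{k+1}p^{2k}}{k}x^kg(x)^k.
\]
Setting $S_{k,m}:=m!\,[x^m]g(x)^k\in\ZZ$ and $D_n:=n!\bigl([x^n]h(x^{p^2})-p^2[x^n]h(x)-p^2[x^n]xg(x)\bigr)$, the $k\ge 2$ part of the right side contributes $\sum_{k=2}^n(-1)^{k+1}p^{2k}(k-1)!\binom{n}{k}S_{k,n-k}$ to $D_n$, which is manifestly in $p^4\ZZ\subset p^3\ZZ$.

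Unfolding $D_n$ directly also produces
\[
D_n=[p^2\mid n]\,\frac{n!}{(n/p^2)!}\,d_{n/p^2}-p^2d_n-p^2n\,a_{n-1}.
\]
The estimate $v_p\bigl(n!/(n/p^2)!\bigr)=(n/p^2)(p+1)\ge p+1\ge 3$ when $p^2\mid n$ absorbs the first term into $p^3\ZZ$ as well, so $p^2(d_n+na_{n-1})\in p^3\ZZ$, i.e.
\[
d_n\equiv -n\,a_{n-1}\pmod p\qquad\text{for every }n\ge 1.
\]
Setting $n=1$ yields $d_1\equiv -1\pmod p$, and setting $n=p^j$ with $j\ge 1$ yields $d_{p^j}\equiv -p^j a_{p^j-1}\equiv 0\pmod p$.

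To convert these facts about $d_k$ into information on $a_{p^N}$, I apply Fa\`a di Bruno's formula
\[
a_n=\sum_{\lambda\vdash n}\frac{n!}{\prod_j(j!)^{m_j}\prod_jm_j!}\prod_jd_j^{m_j},\qquad\lambda=(1^{m_1},2^{m_2},\dots).
\]
Using $v_p(n!)=(n-s_p(n))/(p-1)$, the valuation of the multinomial coefficient at $n=p^N$ simplifies to $\tfrac{1}{p-1}\bigl(\sum_jm_j(s_p(j)-1)+\sum_js_p(m_j)-1\bigr)$, which vanishes precisely for the ``aligned'' partitions $\lambda=(p^i)^{p^{N-i}}$ with $0\le i\le N$. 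A Wilson-style computation of the unit parts of $(p^N)!$, $(p^i!)^{p^{N-i}}$, and $(p^{N-i})!$ shows that each of the surviving multinomial coefficients is $\equiv 1\pmod p$, so combining with Fermat's little theorem ($d_{p^i}^{p^{N-i}}\equiv d_{p^i}\pmod p$ since $d_{p^i}\in\ZZ$) gives
\[
a_{p^N}\equiv\sum_{i=0}^N d_{p^i}^{p^{N-i}}\equiv\sum_{i=0}^N d_{p^i}\equiv d_1\equiv -1\pmod p.
\]
The main obstacle I foresee is the combinatorial/$p$-adic analysis that isolates the aligned set-partition types of $[p^N]$ and verifies that their multinomial counts reduce to $1$ modulo $p$; once those two lemmas are established, everything else is routine bookkeeping.
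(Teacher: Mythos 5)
Your proposal is correct, and it takes a genuinely different route from the paper's proof. The paper works directly with the Böttcher recursion, writing $a_k = A_k[x^k]-B_k[x^k]-C_k[x^k]$ and showing via Legendre's formula that in the $q$-th power expansion $B_k[x^k]$ exactly one monomial, $a_0^{q-p}a_{k/p}^p$, has a $p$-unit coefficient; a Wilson-type computation of the unit parts then gives $a_k\equiv a_{k/p}\pmodintext{p}$, and the theorem follows by induction from $a_1=-1$. You instead linearize by passing to $h=\log g$, which converts the multiplicative functional equation into $h(x^{p^2})-p^2h(x)=\log(1+p^2xg(x))$ and yields the clean congruence $d_n\equiv -na_{n-1}\pmodintext{p}$ for \emph{all} $n$ (I checked your valuation bounds: the $k\ge2$ log terms contribute in $p^4\ZZ$, and $\ord_p(n!/(n/p^2)!)=(n/p^2)(p+1)\ge3$, so the argument is sound); you then transfer back to $a_{p^N}$ via the exponential/Bell-polynomial formula, where the same Legendre analysis isolates the aligned partitions $(p^i)^{p^{N-i}}$ and Wilson's theorem shows each surviving coefficient is $\equiv1\pmodintext{p}$, giving $a_{p^N}\equiv\sum_{i=0}^N d_{p^i}\equiv d_1\equiv-1$. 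The two technical lemmas you flag as the remaining obstacle are real but entirely parallel to what the paper proves for $B_k[x^k][\bfa^\bfe]$ (its displayed valuation formula and its Lemma on $T_p((p^r)!)\equiv(-1)^r$), so they are fillable by the identical techniques; I verified the unit-part computation gives $(-1)^{N-i\cdot p^{N-i}-(N-i)}\equiv1$ for odd $p$ and trivially for $p=2$. What your approach buys is a closed-form congruence for $a_{p^N}$ rather than a step-by-step descent, plus the auxiliary relation $d_n\equiv-na_{n-1}\pmodintext{p}$ for every $n$, which looks potentially useful for attacking Conjecture~\ref{conjecture:xp2akval}; what the paper's approach buys is a more self-contained argument that stays entirely inside the original recursion and generalizes readily (as noted in its closing remark) to arbitrary $q$ and $k$.
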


Table~\ref{table:botexample} illustrates our results by giving the
first few terms of the B\"ottcher coordinate of $\f(x)\in x^m+
mx^{m+1}R[\![x]\!]$ for small values of~$m$.

\begin{table}
  \tiny
\[
\begin{array}{|c|l|}\hline
  m & \text{B\"ottcher coordinate of $x^m+mx^{m+1}$} \\ \hline\hline
  2 & \vrule height10pt depth5pt width0pt
  x - x^2 + 2 x^3 - 7 x^4 + 26 x^5 - 98 x^6 + 389 x^7 - 1617 x^8 +
  6884 x^9 +\cdots \\ \hline
  3 & \vrule height10pt depth5pt width0pt
  x - x^2 + 3 x^3 - 12 x^4 + 52 x^5 - 246 x^6 + 1224 x^7 - 6300 x^8 +
  33300 x^9 +\cdots \\ \hline
  4 & \vrule height10pt depth5pt width0pt
  x - x^2 + \tfrac{7}{2} x^3 - 16 x^4 + \tfrac{661}{8} x^5 -
  \tfrac{923}{2} x^6 + \tfrac{43221}{16} x^7 - 16368 x^8 +
  \tfrac{13029155}{128} x^9 +\cdots \\ \hline
  5 & \vrule height10pt depth5pt width0pt
  x - x^2 + 4 x^3 - 21 x^4 + 125 x^5 - 801 x^6 + 5386 x^7 - 37497
  x^8 + 267913 x^9 +\cdots \\ \hline
  6 & \vrule height10pt depth5pt width0pt
  x - x^2 + \tfrac{9}{2} x^3 - \tfrac{80}{3} x^4 + \tfrac{4301}{24}
  x^5 - 1296 x^6 + \tfrac{1416521}{144} x^7 - \tfrac{695549}{9} x^8 +
  \tfrac{79748667}{128} x^9 +\cdots \\ \hline
  7 & \vrule height10pt depth5pt width0pt
  x - x^2 + 5 x^3 - 33 x^4 + 247 x^5 - 1989 x^6 + 16807 x^7 -
  146968 x^8 + 1318564 x^9 +\cdots \\ \hline
  8 & \vrule height10pt depth5pt width0pt
  x - x^2 + \tfrac{11}{2} x^3 - 40 x^4 + \tfrac{2639}{8} x^5 - 2926
  x^6 + \tfrac{435643}{16} x^7 - 262144 x^8 + \tfrac{331406059}{128}
  x^9 +\cdots \\ \hline
  9 & \vrule height10pt depth5pt width0pt
  x - x^2 + 6 x^3 - \tfrac{143}{3} x^4 + \tfrac{1288}{3} x^5 - 4158
  x^6 + \tfrac{380120}{9} x^7 - \tfrac{3994133}{9} x^8 + 4782969 x^9 
  +\cdots \\ \hline
  10 & \vrule height10pt depth5pt width0pt
  x - x^2 + \tfrac{13}{2} x^3 - 56 x^4 + \tfrac{4375}{8} x^5 -
  \tfrac{28704}{5} x^6 + \tfrac{5055273}{80} x^7 - \tfrac{3596928}{5}
  x^8 + \tfrac{5375265623}{640} x^9 +\cdots \\ \hline
\end{array}
\]
\caption{B\"ottcher coordinate of $x^m+mx^{m+1}$}
\label{table:botexample}
\end{table}

B\"ottcher coordinates of polynomials over $p$-adic fields have been
investigated in~\cite{arxiv1703.05365,MR3064415}, where they are
applied to the study of $p$-adic dynamics. (See
Section~\ref{section:earlierwork} for details.) In this context, a key
quantity is the radius of convergence of the B\"ottcher
coordinate. Our main results yield the following estimates for this
radius.

\begin{corollary}
\label{corollary:botradconv}  
Let~$p$ be a prime, let $R_p=\{c\in\CC_p : \|c\|_p\le1\}$ be the ring of
integers of~$\CC_p$, and let $m\ge2$ be an integer.  For each 
indicated type of map~$\f$ , the B\"ottcher coordinate~$f_\f$ and its
inverse~$f_\f^{-1}$ converge on the indicated disk~$\Dcal$ and define an isometry
\[
  f_\f : \Dcal \xrightarrow{\;\;\sim\;\;} \Dcal.
\]
In particular,~$\f(x)$ is $p$-adically analytically conjugate to~$x^m$
on~$\Dcal$.
\begin{parts}
\Part{(a)}
For $\f(x)\in x^m+x^{m+1}R_p[\![x]\!]$, as in
Proposition~$\ref{proposition:botmprimetop}$ and
Theorem~$\ref{theorem:compm}(a)$, we may take
\[
  \Dcal = \begin{cases}
    \bigl\{ x\in\CC_p : \|x\|_p < 1 \bigr\} &\text{if $p\nmid m$.}\\
    \bigl\{ x\in\CC_p : \|x\|_p < p^{-1/(p-1)}\|m\|_p \bigr\} &\text{if $p\mid m$.}\\
  \end{cases}
\]
\Part{(b)}
For $\f(x)=x^m\sum_{k=0}^\infty b_kx^k/k!$ with $b_0=1$ and $k!b_k\in mR_p[\![x]\!]$ for all
$1\le k<m$, as in Theorem~$\ref{theorem:compm}(b)$, we may take
\[
  \Dcal =   \bigl\{ x\in\CC_p : \|x\|_p < p^{-1/(p-1)} \bigr\}.
\]
\Part{(c)}
For $\f(x)\in x^p+px^{p+1}R_p[\![x]\!]$ as in
Theorem~\ref{theorem:fphiinRp}, we may take
\[
  \Dcal =   \bigl\{ x\in\CC_p : \|x\|_p < 1 \bigr\}.
\]
\Part{(d)}
For $\f(x)=x^{p^2}+p^2x^{p^2+1}$ as in Theorem~\ref{theorem:radiusp2},
the radius of convergence of the B\"ottcher coordinate~$f_\f(x)$
is exactly equal to $p^{-1/(p-1)}$.
\end{parts}
\end{corollary}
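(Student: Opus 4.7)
The plan is to combine the integrality statements of Theorems~\ref{theorem:compm}, \ref{theorem:fphiinRp}, and~\ref{theorem:radiusp2} (together with Proposition~\ref{proposition:botmprimetop}) with the standard non-archimedean estimate
\[
  \|k!\|_p = p^{-(k-s_p(k))/(p-1)},
\]
coming from Legendre's formula, where $s_p(k)$ denotes the sum of the base-$p$ digits of $k$.

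For parts (a), (b), and (c) the argument is uniform. In each case the relevant result writes the coefficient of $x^{k+1}$ in $f_\varphi(x)$ as $a_k/D_k$, where $\|a_k\|_p\le 1$ and $D_k$ is, respectively, $1$ (for $p\nmid m$ in (a)), $m^k k!$ (for $p\mid m$ in (a)), $k!$ (for (b)), and $1$ (for (c)). Combining $\|a_k x^{k+1}/D_k\|_p \le \|x\|_p^{k+1}\cdot\|D_k\|_p^{-1}$ with Legendre's formula shows that the general term of $f_\varphi(x)$ tends to zero in $k$ precisely when $\|x\|_p$ is strictly below the radius asserted in the corollary. The same estimate is in fact strictly less than $\|x\|_p$ for every $k\ge1$ and every nonzero $x\in\Dcal$, so the leading term dominates and $\|f_\varphi(x)\|_p=\|x\|_p$; writing $f_\varphi(x)-f_\varphi(y)=(x-y)\cdot U(x,y)$ and verifying $\|U(x,y)-1\|_p<1$ by the same bound upgrades this to a metric-preserving bijection of $\Dcal$. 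The inverse $f_\varphi^{-1}$, whose integrality is part of each theorem, is handled identically.

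Part (d) has two halves. The lower bound $p^{-1/(p-1)}$ for the radius of convergence is immediate from part~(b), since $\varphi(x)=x^{p^2}+p^2 x^{p^2+1}$ satisfies the hypothesis of Theorem~\ref{theorem:compm}(b). For the matching upper bound I would extract the subsequence of terms indexed by $k=p^j$: Theorem~\ref{theorem:radiusp2} forces $\|a_{p^j}\|_p=1$, and $v_p((p^j)!)=(p^j-1)/(p-1)$, so the $(p^j+1)$-st term of $f_\varphi(x)$ has norm exactly
\[
  \Bigl\|\tfrac{a_{p^j}}{(p^j)!}\,x^{p^j+1}\Bigr\|_p
  = \|x\|_p\cdot p^{-1/(p-1)}\cdot\bigl(\|x\|_p\,p^{1/(p-1)}\bigr)^{p^j}.
\]
When $\|x\|_p\ge p^{-1/(p-1)}$ this quantity is at least $p^{-2/(p-1)}$ for every $j$, so the series diverges, pinning the radius of convergence at exactly $p^{-1/(p-1)}$.

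The main obstacle is modest: the estimates in (a)--(c) reduce to Legendre's formula once the integrality supplied by the earlier theorems is in hand. The only genuinely substantive input is Theorem~\ref{theorem:radiusp2}, whose mod-$p$ non-vanishing of $a_{p^j}$ is precisely what is needed to force divergence on the boundary in part (d); without it, one could only assert convergence on the open disk of radius $p^{-1/(p-1)}$ and could not rule out a larger disk of convergence.
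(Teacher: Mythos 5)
Your proposal is correct and follows essentially the same route as the paper: combine the integrality results of Proposition~\ref{proposition:botmprimetop} and Theorems~\ref{theorem:compm}, \ref{theorem:fphiinRp}, \ref{theorem:radiusp2} with Legendre's formula $\|k!\|_p = p^{-(k-S_p(k))/(p-1)}$ to bound the coefficients, prove isometry by controlling $\bigl\|\frac{f_\f(x)-f_\f(y)}{x-y}-1\bigr\|_p$, and pin down the radius in~(d) via the subsequence $k=p^j$ where Theorem~\ref{theorem:radiusp2} forces $\|a_k\|_p=1$. The only cosmetic difference is that the paper packages the isometry computation into a separate Lemma~\ref{lemma:isometry} covering the disks of radius $1$ and $p^{-1/(p-1)}$, and then handles the $p\mid m$ case of part~(a) by conjugating to $mf_\f(x/m)$ rather than by directly estimating $\|m^k k!\|_p$ as you do; both calculations yield the same disk.
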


\begin{remark}
Theorem~\ref{theorem:radiusp2} says that the $k$'th
coefficient~$a_k/k!$ of the B\"ottcher coordinate
of~$x^{p^2}+p^2x^{p^2+1}$ satisfies $a_k\equiv-1\pmodintext{p}$
provided that~$k$ is a power of~$p$.  Experiments suggest that this
reflects a much more widespread phenomenon.  For example, we suspect
that if~$k$ is a multiple of~$p$, then we always have
\[
  a_k \equiv (-1)^{k/p}\pmodintext{p}.
\]
In Section~\ref{section:xp2conj} we assemble a number of conjectures,
based on numerical evidence, that describe various $p$-adic properties
of the B\"ottcher coordinate of $x^{p^2}+p^{r+2}x^{p^2+1}$. In
particular, we conjecture that the radius of convergence of the
B\"ottcher coordinate is exactly $p^{-p^{-r}/(p-1)}$.
\end{remark}

\begin{remark}
Theorem~\ref{theorem:fphiinRp} tells us that if~$t\in\ZZ_p$, then the
B\"ottcher coordinate of $\f(x)=x^p+ptx^{p+1}$ has $p$-integral
coefficients. However, if we treat~$t$ as an indeterminate, then the
coefficients of~$f_\f(x)$ are in~$\QQ[t]$, but they often fail to be
in~$\ZZ[t]$. For example, for~$p=2$ we find that
{\tiny
  \[
    f_\f(x) =
  x -t x^{2} + \left( \dfrac{5 t^2 - t}{2} \right) x^{3} - (8 t^3 -
  t^2) x^{4} + \left( \dfrac{231 t^4 - 30 t^3 + 9 t^2 - 2 t}{8} \right)
  x^{5} +\cdots\,,
  \]
}%
and for $p=3$ we have
{\tiny
  \[
  f_\f(x) = 
  x -t x^{2} + 3 t^2 x^{3} - \left( \dfrac{35 t^3 + t}{3} \right)
  x^{4} + \left( \dfrac{154 t^4 + 2 t^2}{3} \right) x^{5} - (243 t^5 + 3 t^3) x^{6} +\cdots\,.
  \]
}%
Although the coefficients of~$f_\f(x)$ are in~$\QQ[t]$, we can verify
that they are integer-valued, as they must be according to
Theorem~\ref{theorem:fphiinRp}, by writing their Newton--Mahler
expansions. For example, for~$p=2$ the coefficient of~$x^5$
in~$f_\f(x)$ has Newton--Mahler expansion
\[
  \dfrac{231 t^4 - 30 t^3 + 9 t^2 - 2 t}{8}=
  693 \binom{t}{4} + 1017 \binom{t}{3} + 384 \binom{t}{2} + 26 \binom{t}{1}.
\]
\end{remark}

\begin{remark}
In this paper we start with a power series~$\f(x)=x^m+\cdots$ having a
critical point at~$0$ and study the arithmetic properties of the
coefficients of the B\"ottcher coordinate~$f_\f(x)$ that
conjugates~$\f(x)$ to~$x^m$. We mention that if instead we start with
an invertible power series~$f(x)=x+\cdots$, then there is a unique
power series for which~$f$ is the $m$-power B\"ottcher coordinate.
Indeed, replacing~$x$ by~$f^{-1}(x)$ in the B\"ottcher
equation~\eqref{eqn:bottdef} yields
\[
  \f(x)=f\bigl(f^{-1}(x)^m\bigr),
\]
and this~$\f$ clearly satisfies $\f(x)=x^m+\cdots$ and $f_\f(x)=\f(x)$.
\end{remark}

We briefly indicate the contents of this paper. In
Section~\ref{section:earlierwork} we review some of the earlier work
that has been done on B\"ottcher coordinates in the $p$-adic and
characteristic~$p$ setting, after which Section~\ref{section:inverses}
contains some useful facts concerning inverses of various types of
power series.  This is followed in
Sections~\ref{section:pforthmm},~\ref{section:pforthmp},
and~\ref{theorem:radiusp2} with the proofs, respectively, of
Theorems~\ref{theorem:fphiinRp},~\ref{theorem:compm}
and~\ref{theorem:radiusp2}.  In Section~\ref{section:botradconv} we use our
earlier results to prove Corollary~\ref{corollary:botradconv}.
Finally, in Section~\ref{section:xp2conj} we give various precise
conjectures describing the coefficients of the B\"ottcher coordinate
for maps of the form $x^{p^2}+p^{r+2}x^{p^2+1}$.

\section{Earlier and Related Work}
\label{section:earlierwork}


In this section we briefly summarize earlier work on $p$-adic and
characteristic~$p$ B\"ottcher coordinates and relate it to the present
paper.  B\"ottcher coordinates of polynomials over $p$-adic fields
appear to have first been studied by Ingram~\cite{MR3064415} in the
case that the ramification degree~$m$ is relatively prime to~$p$. This
 work was extended and generalized by DeMarco, Ghioca, Krieger,
Nguyen, Tucker, and Ye~\cite{arxiv1703.05365} in two ways. First, they
allow~$m$ to be divisible by~$p$, and second, they work uniformly in
families of polynomials.  Both of these earlier papers consider only
the B\"ottcher coordinate of a monic polynomial in a neighborhood
of~$\infty$, i.e., they restrict attention to rational functions
having a \emph{totally ramified} fixed point. This contrasts with our
results, which apply in particular to rational functions having a
critical fixed point that need not be totally ramified.

We state the result of DeMarco et al., which generalizes
Ingram~\cite[Theorem 2]{MR3064415}, but we conjugate by $x\to x^{-1}$
so as to move their (totally) ramified fixed point to~$0$.

\begin{theorem}[DeMarco et al.\ {\cite[Theorem 6.5]{arxiv1703.05365}}]
\label{theorem:demarcoetal}
Let~$m\ge2$, let $\b_1,\ldots,\b_m\in\CC_p$, let~$\|\,\cdot\,\|_p$ be
the usual absolute value on~$\CC_p$ normalized so
that~$\|p\|_p=p^{-1}$, and let $\f(x)\in\CC_p[\![x]\!]$ be the Taylor series
around~$0$ of the rational function
\[
  \frac{x^m}{1+\b_1x+\b_2x^2+\cdots+\b_mx^m} \in \CC_p[x].
\]
Set
\[
\|\bfbeta\|_p := \max\bigl\{1,\|\b_1\|_p,\ldots,\|\b_m\|_p\bigr\},
\]
and let~$\Dcal_\f$ be the disk
\[
\Dcal_\f :=
\begin{cases}
  \bigl\{ x\in\CC_p : \|x\|_p < \|\bfbeta\|_p^{-1} \bigr\}
  &\text{if $p\nmid m$,} \\[1\jot]
  \bigl\{ x\in\CC_p : \|x\|_p < p^{-1/(p-1)}\|m\|_p\|\bfbeta\|_p^{-1} \bigr\}  
  &\text{if $p\mid m$.} \\
\end{cases}
\]
Then the B\"ottcher coordinate~$f_\f(x)$ converges on~$\Dcal$ and defines
an injective map $f_\f:\Dcal\hookrightarrow\CC_p$.
\end{theorem}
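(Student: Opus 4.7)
The plan is to attack the theorem via the limit construction of the Böttcher coordinate,
\[
  f_\f(x) \;=\; \lim_{n\to\infty}\bigl(\f^{\circ n}(x)\bigr)^{1/m^n},
\]
since this formulation already encodes the ``right'' analytic content and the $p$-adic branches of the $m^n$-th root are forced on us by requiring $f_\f(x)=x+O(x^2)$. Write $\f(x)=x^m\psi(x)$ where $\psi(x)=\bigl(1+\b_1x+\cdots+\b_m x^m\bigr)^{-1}\in 1+x\CC_p[\![x]\!]$. The first observation is that for $\|x\|_p<\|\bfbeta\|_p^{-1}$ every $\b_kx^k$ has norm strictly less than $1$, so $\|\psi(x)\|_p=1$ and $\|\f(x)\|_p=\|x\|_p^m$. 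In particular $\f$ maps the disk $\bigl\{\|x\|_p<\|\bfbeta\|_p^{-1}\bigr\}$ into itself and contracts all norms by the ``raise to $m$'' law $\|\f^{\circ n}(x)\|_p=\|x\|_p^{m^n}$.

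Next I would peel off the leading $x^{m^n}$. Iterating $\f(y)=y^m\psi(y)$ with $y=\f^{\circ k}(x)$ gives
\[
  \f^{\circ n}(x) \;=\; x^{m^n}\cdot u_n(x),
  \qquad
  u_n(x) \;=\; \prod_{k=0}^{n-1}\psi\bigl(\f^{\circ k}(x)\bigr)^{m^{n-1-k}},
\]
so that $\f^{\circ n}(x)^{1/m^n}=x\cdot u_n(x)^{1/m^n}$. The question is purely whether the products $u_n(x)^{1/m^n}$ have a nonarchimedean limit and whether, together with $x$, they realize an isometry of $\Dcal_\f$. The natural tool is the $p$-adic logarithm: take logs to get
\[
  \frac{1}{m^n}\log u_n(x) \;=\; \sum_{k=0}^{n-1}\frac{1}{m^{k+1}}\log\psi\bigl(\f^{\circ k}(x)\bigr),
\]
which telescopes into an absolutely convergent ``potential'' once the terms lie in the common domain of $\log$ and $\exp$. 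Because $\psi(y)-1$ has norm at most $\|\bfbeta\|_p\|y\|_p$, the estimate $\|\f^{\circ k}(x)\|_p=\|x\|_p^{m^k}$ from the first step makes the general term decay doubly exponentially in $k$, so convergence is automatic as soon as each summand sits inside the domain of $\log$, namely $\|\psi(\f^{\circ k}(x))-1\|_p<1$, which follows from $\|x\|_p<\|\bfbeta\|_p^{-1}$.

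The main obstacle is the factor $1/m^{k+1}$, which is harmless when $p\nmid m$ but costs a factor $\|m\|_p^{-(k+1)}$ in norm when $p\mid m$. To exponentiate back we need each $m^{-(k+1)}\log\psi(\f^{\circ k}(x))$ to lie in the disk of convergence of $\exp$, namely $\|\cdot\|_p<p^{-1/(p-1)}$. The worst case is $k=0$, requiring
\[
  \|m\|_p^{-1}\cdot \|\bfbeta\|_p\cdot \|x\|_p \;<\; p^{-1/(p-1)},
\]
which is exactly the condition $\|x\|_p<p^{-1/(p-1)}\|m\|_p\|\bfbeta\|_p^{-1}$ defining $\Dcal_\f$; subsequent terms are automatically smaller because $\|\f^{\circ k}(x)\|_p=\|x\|_p^{m^k}$ shrinks much faster than $\|m\|_p^{-k}$ grows. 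Setting $f_\f(x)=x\exp\bigl(\lim_n m^{-n}\log u_n(x)\bigr)$ then gives a convergent power series on $\Dcal_\f$, and the relation $\f\circ f_\f(x)=f_\f(x^m)$ is built in by construction. Finally, injectivity/isometry follows from the leading term $x$ together with the fact that the correction $\exp(\cdots)$ has norm $1$ on $\Dcal_\f$, so $\|f_\f(x)\|_p=\|x\|_p$ and $\|f_\f(x)-f_\f(y)\|_p=\|x-y\|_p$ via the usual telescoping argument.
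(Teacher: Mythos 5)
This is a quoted result (DeMarco et al.), so the paper gives no proof to compare against; your limit-based construction $f_\f=\lim_n(\f^{\circ n})^{1/m^n}$ is indeed the strategy used in the cited source, and your reduction to the telescoping sum $\sum_k m^{-(k+1)}\log\psi(\f^{\circ k}(x))$ together with the norm law $\|\f^{\circ k}(x)\|_p=\|x\|_p^{m^k}$ is the right skeleton. The case $p\mid m$ goes through as you describe: on the stated disk one checks $\|\psi(\f^{\circ k}(x))-1\|_p<p^{-1/(p-1)}$ for every $k$, so $\log$ is norm-preserving there, the $k=0$ term pins down the radius, and the $k\ge1$ terms satisfy $\|m\|_p^{-(k+1)}\|\bfbeta\|_p\|x\|_p^{m^k}<p^{-1/(p-1)}$ because $m^k-1\ge k$.

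The genuine gap is in the case $p\nmid m$. There the claimed disk is $\|x\|_p<\|\bfbeta\|_p^{-1}$, on which you only get $\|\psi(x)-1\|_p<1$; when $p^{-1/(p-1)}\le\|\psi(x)-1\|_p<1$ the logarithm is defined but need not satisfy $\|\log\psi(x)\|_p\le\|\psi(x)-1\|_p$ (e.g.\ the term $z^p/p$ can dominate), and the resulting sum need not land in the disk $\|\cdot\|_p<p^{-1/(p-1)}$ where $\exp$ converges. If you impose your own ``worst case $k=0$'' condition uniformly, with $\|m\|_p=1$ it reads $\|x\|_p<p^{-1/(p-1)}\|\bfbeta\|_p^{-1}$, which is strictly smaller than the disk the theorem asserts; so the $\exp/\log$ device cannot recover the full statement when $p\nmid m$. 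The repair is to bypass $\exp/\log$ in that case and extract the root directly: since $m\in\ZZ_p^*$, the binomial coefficients $\binom{1/m^n}{j}$ lie in $\ZZ_p$, so $(1+z)^{1/m^n}$ converges for all $\|z\|_p<1$ with $\|(1+z)^{1/m^n}-1\|_p\le\|z\|_p$, and applying this to $u_{n+1}^{1/m^{n+1}}/u_n^{1/m^n}=\psi(\f^{\circ n}(x))^{1/m^{n+1}}$ gives the convergence on the whole disk $\|x\|_p<\|\bfbeta\|_p^{-1}$. Two smaller points you should not wave away: the functional equation is not literally ``built in,'' since $\f^{\circ n}(x^m)\ne\f^{\circ(n+1)}(x)$ (one instead identifies the limit with the formal B\"ottcher series of Proposition~\ref{proposition:fphidef}); and injectivity requires the coefficient bounds $\|a_j\|_p\rho^{j-1}\le1$ feeding into the estimate $\bigl\|(x^j-y^j)/(x-y)\bigr\|_p\le\max(\|x\|_p,\|y\|_p)^{j-1}$, as in Lemma~\ref{lemma:isometry}, not just the pointwise statement $\|f_\f(x)\|_p=\|x\|_p$.
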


We observe that if we apply Theorem~\ref{theorem:demarcoetal} in the
case that~$\b_1,\ldots,\b_m$ are~$p$-integral, then~$\f(x)$
has~$p$-integral coefficients, and the convergence results in
Theorem~\ref{theorem:demarcoetal}  are the same as those obtained in
Corollary~\ref{corollary:botradconv}(a).

However, even if we assume that~$\b_1,\ldots,\b_m$ are highly
$p$-divisible in Theorem~\ref{theorem:demarcoetal}, we obtain no
improvement in the disk~$\Dcal$, since we always
have~$\|\bfbeta\|_p\ge1$. In particular,
Theorem~\ref{theorem:demarcoetal} does not imply the stronger
convergence estimates given in
Corollary~\ref{corollary:botradconv}(b,c).  On the other hand, the
results of Ingram and of DeMarco et al.\ do apply to series whose
coefficients are not necessarily $p$-integral, a situation that we do
not consider in the present paper. Roughly speaking, the earlier
papers include (polynomial) maps having bad reduction, while our
results deal with maps having good reduction, and we give improved
estimates in the case that $\f(x)\equiv x^m\pmodintext{m}$, which one might
say is the case that~$\f(x)$ has ``very good reduction.''

We note again that the work of Ingram~\cite{MR3064415} and DeMarco et
al~\cite{arxiv1703.05365} deal only with the B\"ottcher coordinates of
polynomial functions, i.e., rational functions having a \emph{totally
  ramified} fixed point. This contrasts with our results, which apply
in particular to rational functions having a critical fixed point that
need not be totally ramified, although we expect that their arguments
could be  adapted to the more general setting.

We also note that these earlier papers construct the B\"ottcher
coordinate via the classical limit~\eqref{eqn:bottaslim} mentioned in
the introduction. This may well have some technical advantages, but it
seems that in order to study subtler $p$-adic properties of B\"ottcher
coordinates, it is necessary to use the finer combinatorial
information provided by the recursive formula for the coefficients of
the B\"ottcher coordinate, as is done in the present paper.

\begin{remark}
Superattracting germs in characteristic~$p$ present many additional
complications if the ramification index is divisible by~$p$.  In
particular, the B\"ottcher coordinate need not exist, and one obtains
interesting parameter and moduli spaces of B\"ottcher-like
coordinates. The case~$\f(x)\in x^p+x^{p+1}K[\![x]\!]$ was studied by
Spencer in his thesis~\cite{spencer:thesis}, and the general case was
investigated by Ruggiero in~\cite{MR3394117}. 
\end{remark}

\begin{remark}
B{\"o}ttcher coordinates have also been studied for higher dimensional
maps~$\CC^N\to\CC^N$; see for example~\cite{MR3119600}. It would be
interesting to investigate the higher dimensional situation for
non-archimedean fields, but we do not do so in the present paper.
\end{remark}

\begin{remark}
There is, of course, a wide body of work on linearization of maps at
\emph{non-critical} fixed points, i.e., at fixed points that are not
superattracting, in both the complex and the non-archimedean
settings. For the latter, see for
example~\cite{arxiv:0808.3266,arxiv0805.1560}.
\end{remark}

\section{Inverses of power series}
\label{section:inverses}
In this section we describe various sets of power series that are
invariant under taking inverses.  We will use the following well-known
formula for the $k$'th derivative of the composition of functions.

\begin{lemma}[Formula of Fa\'a di Bruno and Arbogast]
\label{lemma:dibruno}
Let $F(x)$ and $G(x)$ be functions that are sufficiently differentiable.
Then the $k$'th derivative of the composition~$F\circ G$ is given by the formula
\begin{multline}
  \label{eqn:faabruno}
  D_x^k F(G(x))
  =\sum_{1\cdot e_1+2\cdot e_2+3\cdot e_3+\cdots+k\cdot e_k=k}  \frac{k!}{e_1!\,1!^{e_1}\,e_2!\,2!^{e_2}\,\cdots\,e_k!\,k!^{e_k}}\\*
  \cdot
  (D_x^{e_1+\cdots+e_k}F)(G(x))\cdot
  \prod_{j=1}^k\left(D_x^jG(x)\right)^{e_j}.
\end{multline}
\end{lemma}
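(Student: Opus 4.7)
My plan is to prove the formula by induction on $k$, since Fa\`a di Bruno's identity is classical and an inductive argument is the most direct route. The base case $k=1$ is just the chain rule $D_x F(G(x)) = F'(G(x))\cdot G'(x)$, matching the unique tuple $(e_1)=(1)$ with coefficient $1!/(1!\cdot 1!^{1}) = 1$.

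For the inductive step, I would assume the formula holds for $k$ and apply $D_x$ term-by-term. A generic summand has the shape
\[
  C(\bfe)\cdot (D_x^{|\bfe|}F)(G(x))\cdot\prod_{j=1}^{k}\bigl(D_x^{j}G(x)\bigr)^{e_j},
\]
where $|\bfe|=e_1+\cdots+e_k$ and $C(\bfe)=k!/\prod_j e_j!\,j!^{e_j}$. Applying $D_x$ via the product rule produces two kinds of contributions: (i) differentiating $(D_x^{|\bfe|}F)(G(x))$ yields $(D_x^{|\bfe|+1}F)(G(x))\cdot D_xG(x)$, which corresponds to incrementing $e_1$ by $1$; and (ii) for each index $j$ with $e_j\ge 1$, differentiating $\bigl(D_x^jG\bigr)^{e_j}$ replaces one factor $D_x^jG$ by $D_x^{j+1}G$, which corresponds to decrementing $e_j$ by $1$ and incrementing $e_{j+1}$ by $1$.

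After this step I would reindex and collect, for each tuple $\bfe'=(e_1',\ldots,e_{k+1}')$ satisfying $\sum j e_j' = k+1$, all contributions landing on
\[
  (D_x^{|\bfe'|}F)(G(x))\cdot\prod_{j=1}^{k+1}\bigl(D_x^{j}G(x)\bigr)^{e_j'}.
\]
The heart of the argument, and the one piece of combinatorial bookkeeping, is then the identity
\[
  \frac{k!\, e_1'}{(e_1'-1)!\,1!^{e_1'-1}\prod_{j\ge 2}e_j'!\,j!^{e_j'}}
  \;+\;\sum_{j\ge 1}\frac{k!\,(e_j+1)}{e_1'!\,1!^{e_1'}\cdots(e_j+1)!\,j!^{e_j+1}(e_{j+1}'-1)!\,(j+1)!^{e_{j+1}'-1}\cdots}
  \;=\; \frac{(k+1)!}{\prod_j e_j'!\,j!^{e_j'}},
\]
which (after canceling common factors) reduces to the elementary statement $e_1' + \sum_{j\ge 1} (j+1)e_{j+1}' = k+1$, i.e. $\sum_j j e_j' = k+1$.

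The main obstacle is purely bookkeeping: making sure that every tuple $\bfe'$ is accounted for exactly once by the correct combination of type-(i) and type-(ii) moves, and that the coefficient arithmetic matches. A conceptually cleaner route, which I would mention as an alternative, is the combinatorial interpretation: expand $D_x^k F(G(x))$ and observe that the coefficient of the monomial above counts set partitions of $\{1,\ldots,k\}$ with exactly $e_j$ blocks of size $j$, giving directly the multinomial $k!/\prod_j e_j!\,(j!)^{e_j}$. Since the formula is classical, I would also note that the statement can simply be cited from Comtet's \emph{Advanced Combinatorics} (Theorem~A on p.~137).
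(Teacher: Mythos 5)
The paper itself does not prove Lemma~\ref{lemma:dibruno}; its ``proof'' is simply a pointer to the expository references \cite{MR1903577,MR602839}. Your sketch therefore supplies content the paper leaves out. The inductive strategy you outline is the standard one and is sound in structure: applying $D_x$ to a generic summand indexed by $\bfe$ either increments $e_1$ (differentiating $(D_x^{|\bfe|}F)\circ G$ via the chain rule) or, for each $j$, moves one unit from $e_j$ to $e_{j+1}$ with a factor $e_j$ (differentiating $(D_x^{j}G)^{e_j}$ via the product rule); collecting the contributions to a fixed target $\bfe'$ and dividing out the common factor $k!\big/\prod_j e_j'!\,j!^{e_j'}$ reduces the coefficient identity to $e_1'+\sum_{j\ge 1}(j+1)e_{j+1}'=\sum_j j\,e_j'=k+1$, exactly as you state.

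One algebraic slip in the displayed identity: the type-(i) contribution should be
\[
  \frac{k!}{(e_1'-1)!\,1!^{e_1'-1}\prod_{j\ge 2}e_j'!\,j!^{e_j'}},
\]
without the extra factor $e_1'$ in the numerator; as written, that term produces $(e_1')^2$ rather than $e_1'$ after cancellation, which would break the reduction you describe. This looks like a transcription error rather than a conceptual gap, since you correctly identify the target identity. Your alternative route via set partitions is equally valid, and it is worth observing that the same combinatorial picture is essentially what the paper itself exploits in Lemma~\ref{lemma:combin}, where $n!\prod_j(j!^{e_j}e_j!)^{-1}\prod_j j^{e_j}$ is interpreted as a count of pointed partitions of $\ZZ/n\ZZ$; so while the paper outsources Lemma~\ref{lemma:dibruno}, its spirit reappears a page later. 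Since the lemma is classical, the paper's decision to cite is defensible, and your citation of Comtet would serve the same purpose.
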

\begin{proof}
See~\cite{MR1903577,MR602839}, for example, for proofs of this formula, 
which dates to the 19'th century.
\end{proof}

Parts~(a) and~(b) of the next proposition are well-known results,
but~(c) and~(d), which are essentially equivalent one another, are
less so.  In particular, we do not see an easy way to use~(b), or its
usual proof by induction as in~\cite[Lemma~IV.5.4]{MR2514094}, to
prove~(c).

\begin{proposition}
\label{proposition:finverseform}
Let~$R$ be a ring of characteristic~$0$. Then each of the following sets
of power series~$\Pcal_1(R),\ldots,\Pcal_4(R)$ in~$R[\![x]\!]$ satisfies
\[
  f(x) \in \Pcal_i(R) \quad\Longleftrightarrow\quad f^{-1}(x)\in \Pcal_i(R).
\]
\vspace{-10pt}
\begin{parts}
\Part{(a)}
$\displaystyle\Pcal_1(R) := \left\{ x\sum_{k=0}^\infty a_kx^k : \text{$a_0=1$ and $a_1,a_2,\ldots\in R$} \right\}$. 
\Part{(b)}
$\displaystyle\Pcal_2(R) := \left\{ x\sum_{k=0}^\infty \frac{a_kx^k}{(k+1)!} : \text{$a_0=1$ and $a_1,a_2,\ldots\in R$} \right\}$. 
\Part{(c)}
$\displaystyle\Pcal_3(R) := \left\{ x\sum_{k=0}^\infty \frac{a_kx^k}{k!} : \text{$a_0=1$ and $a_1,a_2,\ldots\in R$} \right\}$. 
\Part{(d)}
$\displaystyle\Pcal_4(R) := \left\{ x\sum_{k=0}^\infty \frac{a_kx^k}{m^kk!} : \text{$a_0=1$ and $a_1,a_2,\ldots\in R$} \right\}$. 
\end{parts}
\end{proposition}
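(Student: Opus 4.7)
The plan is to prove the four parts in turn, with (a) being routine, (d) reducing to (c) by an elementary rescaling, and (b) and (c) both attacked via Fa\`a di Bruno's formula applied to $f(f^{-1}(x))=x$. For part (a), writing $f(x)=x+\sum_{n\ge1}a_nx^{n+1}$ and $f^{-1}(x)=x+\sum_{n\ge1}b_nx^{n+1}$ and comparing coefficients of $x^{n+1}$ in $f(f^{-1}(x))=x$ expresses $b_n$ as a $\ZZ$-polynomial in $a_1,\ldots,a_n,b_1,\ldots,b_{n-1}$, so $b_n\in R$ by induction. For part (d), the rescaling $\tilde f(x):=f(mx)/m$ is a bijection $\Pcal_4(R)\to\Pcal_3(R)$ that commutes with compositional inversion, since a direct check gives $(\tilde f)^{-1}(x)=f^{-1}(mx)/m$; thus (d) is equivalent to (c).

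For (b) and (c), set $g=f^{-1}$, differentiate $f(g(x))=x$ exactly $k$ times at $x=0$ using Lemma \ref{lemma:dibruno}, and isolate the ``diagonal'' term in which $e_k=1$ and all other $e_j=0$. Since $f'(0)=1$, this term contributes $g^{(k)}(0)$, and we obtain
\[
g^{(k)}(0)=-\!\!\sum_{(e_j)\,\text{not diagonal}}\frac{k!}{\prod_je_j!\,(j!)^{e_j}}\,f^{(s)}(0)\prod_{j}\bigl(g^{(j)}(0)\bigr)^{e_j},\qquad s:=\sum_je_j,
\]
where every $j$ with $e_j>0$ satisfies $j<k$. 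For (b), membership in $\Pcal_2(R)$ is equivalent to $f^{(n)}(0)\in R$ for all $n$, so this recursion and induction immediately give $g^{(k)}(0)\in R$. For (c), membership in $\Pcal_3(R)$ is equivalent to $f^{(n)}(0)=na_{n-1}$ with $a_{n-1}\in R$, and the inductive hypothesis reads $g^{(j)}(0)=jb_{j-1}$ with $b_{j-1}\in R$ for $j<k$; substituting these relations, the recursion becomes
\[
kb_{k-1}=-\!\!\sum_{(e_j)\,\text{not diagonal}}C_{(e_j)}\,a_{s-1}\prod_jb_{j-1}^{e_j},\qquad C_{(e_j)}:=\frac{k!}{\prod_je_j!\,(j!)^{e_j}}\cdot s\cdot\prod_jj^{e_j}.
\]
The induction step therefore reduces to showing $k\mid C_{(e_j)}$ for every tuple with $\sum_jje_j=k$.

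This divisibility is the main obstacle, and I would establish it bijectively. The integer $C_{(e_j)}$ is the number of triples $(\Pi,P,B)$, where $\Pi$ is a set partition of $\{1,\ldots,k\}$ with $e_j$ blocks of size $j$, $P$ is a choice of distinguished element in each block of $\Pi$, and $B$ is a distinguished block of $\Pi$: the first factor enumerates such partitions, $\prod_jj^{e_j}$ counts the pointings, and $s$ counts the choices of $B$. Sending $(\Pi,P,B)$ to the distinguished element of $B$ gives a map to $\{1,\ldots,k\}$ whose fibres are equinumerous by the transitive $S_k$-action, so $k\mid C_{(e_j)}$. Dividing by $k$ then expresses $b_{k-1}$ as a $\ZZ$-linear combination of products of $a$'s and lower-index $b$'s, completing the induction for (c), and (d) follows from (c) by the rescaling described above.
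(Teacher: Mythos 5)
Your proof is correct, and for the key parts (c) and (d) it follows essentially the same path as the paper: apply Fa\`a di Bruno to $f(g(x))=x$, isolate the diagonal term, reduce to the divisibility $k\mid C_{(e_j)}$, interpret $C_{(e_j)}$ as counting set partitions equipped with a pointing of each block and a distinguished block, and finish with a group action. The one difference in the divisibility step is cosmetic: the paper (Lemma~\ref{lemma:combin}) works over $\ZZ/k\ZZ$ and shows the cyclic increment-by-$1$ operator has every orbit of size exactly $k$ by tracking the distinguished point of the distinguished block, whereas you observe that the projection to that same distinguished point is equivariant for the transitive $S_k$-action, so its fibres are equinumerous. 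These are two phrasings of the same counting, and both are valid. For (a) and (b) the paper simply cites Silverman's \emph{Arithmetic of Elliptic Curves} (Lemmas IV.2.4 and IV.5.4), while you give a direct recursion for (a) and a self-contained Fa\`a di Bruno induction for (b); these fill in details the paper outsources and are correct. Your rescaling $\tilde f(x)=f(mx)/m$ reducing (d) to (c) is exactly the paper's identity $f(x)=mF(x/m)$, $f^{-1}(x)=mF^{-1}(x/m)$.
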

\begin{proof}
(a)\enspace
After reindexing, this is \cite[Lemma~IV.2.4]{MR2514094}.
\par\noindent(b)\enspace
Again after reindexing, this is \cite[Lemma~IV.5.4]{MR2514094}.  
\par\noindent(c)\enspace  
As noted earlier, we do not see a way to use~(b) to prove~(c), so we give
a direct proof.  We let~$K$ be the fraction field of~$R$, and we set the notation
\[
  f(x)=x\sum_{k=0}^\infty\frac{a_kx^k}{k!}\in\Pcal_3(R)
  \;\text{and}\;
  g(x):=f^{-1}(x)=x\sum_{k=0}^\infty\frac{b_kx^k}{k!}\in K[\![x]\!].
\]
The facts that $f\bigl(g(x)\bigr)=x$ and $a_0=1$ imply that $b_0=1$, so
it remains to prove that every~$b_k$ is in~$R$.

We set some useful notation. For any list
$\bfe=(e_1,e_2,\ldots)$ of non-negative integers, with only finitely
many non-zero entries, we let
\begin{align*}
  \s(\bfe) &= e_1 + e_2 + e_3 + \cdots\,,\\
  \nu(\bfe) &= 1\cdot e_1 + 2\cdot e_2 + 3\cdot e_3 + \cdots\,.
\end{align*}
We apply the formula of Fa\'a di Bruno (Lemma~\ref{lemma:dibruno}) to compute
the $n$'th derivative of the composition~$f\bigl(g(x)\bigr)$ and
evaluate at~$0$, where we note that with our labeling, we have
\[
  D_x^kf(0) = ka_{k-1}
  \quad\text{and}\quad
  D_x^kg(0) = kb_{k-1}.
\]
This yields
\begin{align*}
  D_x^n(f\circ g)(0)
  &= \sum_{\nu(\bfe)=n}   n! \prod_{j=1}^n \frac{1}{j!^{e_j}\cdot e_j!} \cdot
    (D_x^{\s(\bfe)}f)(g(0))\cdot
    \prod_{j=1}^n\left(D_x^jg(0)\right)^{e_j} \\
  &=\sum_{\nu(\bfe)=n}
  n! \prod_{j=1}^n \frac{1}{j!^{e_j}\cdot e_j!} \cdot \s(\bfe)
  a_{\s(\bfe)-1} \prod_{j=1}^n (jb_{j-1})^{e_j}.
\end{align*}
On the other hand, since~$f\bigl(g(x)\bigr)=x$, we have $D_x^n(f\circ
g)=0$ for all $n\ge2$. This yields a recursion for the coefficients
of~$g(x)$.

Thus the term with $\nu(\bfe)=n$ and $\s(\bfe)=1$, i.e., the term with
$e_n=1$ and all other $e_j=0$, is
\[
  n!\cdot \frac{1} {n!^1\cdot 1!} \cdot 1 \cdot a_0 \cdot nb_{n-1} = nb_{n-1},
\]
so if we assume (by induction) that $b_1,\ldots,b_{n-2}\in R$,
and use $a_0=b_0=1$ and $a_k\in R$ for all~$k$, we see that in order
to prove that $b_{n-1}\in R$, it suffices to show that for every~$\bfe$
satisfying $\nu(\bfe)=n$, we have
\[
  n! \prod_{j=1}^n \frac{1}{j!^{e_j}\cdot e_j!} \cdot
  \s(\bfe) \cdot \prod_{j=1}^n j^{e_j}
  \equiv 0 \pmod{n}. 
\]
The validity of this congruence, which is by no means obvious, is proven
in a separate lemma at the end of this section; see Lemma~\ref{lemma:combin}.
\par\noindent(d)\enspace  
This follows from~(c). Thus let $f(x)=x\sum a_kx^k/m^kk!\in\Pcal_4(R)$.
Then $F(x):=x\sum a_kx^k/k!\in\Pcal_3(R)$, so~(c) tells us
hat~$F^{-1}(x)=x\sum b_kx^k/k!$ with~$b_0=1$ and all~$b_k\in R$.
Using the identity $f(x)=mF(x/m)$, we see that
\[
  f^{-1}(x) = mF^{-1}(x/m) = m\cdot\frac{x}{m} \sum_{k=0}^\infty \frac{b_k(x/m)^k}{k!}
  = x \sum_{k=0}^\infty \frac{b_kx^k}{m^kk!},
\]
which proves that~$f^{-1}(x)\in\Pcal_4(R)$.
\end{proof}

\begin{lemma}
\label{lemma:combin}
Let $e_1,e_2,\ldots,e_n$ be non-negative integers satisfying
\[
  1\cdot e_1 + 2\cdot e_2 + \cdots + n\cdot e_n = n.
\]
Then
\[
  n! \prod_{j=1}^n \frac{1}{j!^{e_j}\cdot e_j!} \cdot
  (e_1+e_2+\cdots+e_n) \cdot \prod_{j=1}^n j^{e_j}
  \equiv 0 \pmod{n}. 
\]
\end{lemma}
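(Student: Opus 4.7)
The plan is to interpret the expression on the left-hand side combinatorially and then extract a distinguished element of $[n]:=\{1,2,\ldots,n\}$ that must, by symmetry, appear with equal multiplicity. Write
\[
  A \;:=\; \frac{n!}{\prod_{j=1}^n j!^{e_j}\,e_j!}\cdot(e_1+\cdots+e_n)\cdot\prod_{j=1}^n j^{e_j},
\]
and set $s=e_1+\cdots+e_n$. The multinomial coefficient $n!/\prod j!^{e_j}\,e_j!$ is the well-known count of set partitions of $[n]$ in which there are exactly $e_j$ blocks of size $j$ for each $j$. Multiplying by $\prod_j j^{e_j}$ (the product of the block sizes) counts the same set partitions enriched by a choice of a distinguished ``representative'' element inside each block, and multiplying by $s$ counts the number of blocks, i.e., the number of ways to designate one block as \emph{special}. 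Hence $A$ equals the number of triples $(P,R,B)$, where $P$ is a set partition of $[n]$ with the prescribed block-size profile, $R$ assigns a representative to each block of $P$, and $B$ is a designated block of $P$.

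Next I would use the data of such a triple to extract a single distinguished element $x\in[n]$, namely $x:=R(B)$, the representative of the special block. Conversely, once $x\in[n]$ is fixed, specifying $(P,R,B)$ with $R(B)=x$ is equivalent to choosing a set partition with the correct profile, declaring the block containing $x$ to be $B$, letting $x$ be its representative, and then choosing representatives freely for the other $s-1$ blocks. Therefore
\[
  A \;=\; \sum_{x\in[n]} N_x,
\]
where $N_x$ denotes the number of triples with $R(B)=x$. The symmetric group $S_n$ acts on the set of all such triples by permuting $[n]$, and a permutation sending $x$ to $x'$ gives a bijection between the triples counted by $N_x$ and those counted by $N_{x'}$. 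Hence $N_x$ is independent of $x$, so $A=n\cdot N_1$, which is divisible by $n$.

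This argument avoids any case analysis on prime divisors of $n$. The only real conceptual step is recognizing that the three factors of $A$ assemble into the count of partitions with a distinguished representative-of-a-special-block, after which the $S_n$-symmetry forces the divisibility; I do not anticipate any serious technical difficulty. An alternative analytic route (proving directly that $s\cdot(n-1)!/\prod(j-1)!^{e_j}e_j!$ is an integer by rearranging the multinomial) is possible but opaque, whereas the combinatorial proof both explains why $n$ divides $A$ and generalizes painlessly.
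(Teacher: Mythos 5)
Your proposal is correct, and it uses essentially the same combinatorial model as the paper but reaches the divisibility conclusion by a genuinely different argument. The paper also counts set partitions of an $n$-element set with the prescribed block-size profile, together with a chosen representative in each block and a distinguished block; it realizes this set inside $\mathbb{Z}/n\mathbb{Z}$ and introduces an ``increment-by-$1$'' operator shifting every element by $1 \bmod n$, then shows this operator is a permutation all of whose orbits have size exactly $n$ --- because the shift acts freely on the distinguished representative $s_1$ --- from which $n$-divisibility follows. You instead fiber the set of triples over $[n]$ via $(P,R,B)\mapsto R(B)$ and invoke the $S_n$-equivariance of this map, together with the transitivity of $S_n$ on $[n]$, to conclude that all fibers are equinumerous, so the total equals $n\cdot N_1$. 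Both arguments are valid and both pivot on the same structural feature (the representative of the distinguished block); your fibering argument is a touch more streamlined since it needs no auxiliary cyclic operator or orbit-size verification, while the paper's cyclic-shift argument makes explicit the exact source of freeness. Neither requires any case analysis on the prime divisors of $n$.
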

\begin{proof}
To ease notation, we let
\[
  m = e_1+e_2+\cdots+e_n.
\]
For the given list $\bfe=(e_1,\ldots,e_n)$,
let~$Z(\bfe)$ be the collection of all expressions of the form
\[
\Bigl( (S_1,s_1), \Bigl\{ (S_2,s_2), \ldots, (S_m,s_m) \Bigr\} \Bigr),
\]
where:
\begin{parts}
\Part{(i)}
$S_1,\ldots,S_m$ are disjoint subsets of $\ZZ/n\ZZ$.
\Part{(ii)}
For each $1\le i\le m$ we have $s_i\in S_i$.
\Part{(iii)}
For each $1\le k\le n$ we have $e_k = \#\{1\le i\le m : \#S_i=k\}$.
\end{parts}
We observe that~(i) and~(iii) imply that~$\ZZ/n\ZZ$ is equal to the
disjoint union of~$S_1,\ldots,S_m$. We also note that an element
of~$Z(\bfe)$ consists of a distinguished pointed set~$(S_1,s_1)$ and an
unordered collection of~$m-1$ additional pointed
sets $(S_2,s_2),\ldots,(S_m,s_m)$.

We claim that
\begin{equation}
  \label{eqn:numZe}
  \#Z(\bfe) =   n! \prod_{j=1}^n \frac{1}{j!^{e_j}\cdot e_j!} \cdot
  m \cdot \prod_{j=1}^n j^{e_j},
\end{equation}
i.e., we claim that~$\#Z(\bfe)$ is exactly the quantity that we are
trying to prove is divisible by~$n$. To see this, we count the number
of ways to create an element of~$Z(\bfe)$. First we take~$\ZZ/n\ZZ$
and partition it into~$m$ subsets $S_1,\ldots,S_m$ consisting of~$e_1$
subsets containing~$1$ element,~$e_2$ subsets containing~$2$ elements,
etc. The number of ways to do this if we keep track of the order of
$S_1,\ldots,S_m$ is the multinomial coefficient
\[
  \binom{n}{1,1,\ldots,1,2,2,\ldots,2,\ldots} = \frac{n!}{1!^{e_1}\cdot 2!^{e_2} \cdots n!^{e_n}}.
\]
However, if we don't care about the order, then we need to divide
by~$e_1!$ to account for reordering the 1-element subsets and
by~$e_2!$ to account for reordering the 2-element subsets, etc. This
accounts for the factor of $\prod e_j!$ in the denominator of~\eqref{eqn:numZe}. Next, we
actually want $S_1,\ldots,S_m$ to be pointed subsets, so we choose one
element from each~$S_i$. This can be done in
$\prod\#S_i=\prod{j}^{e_j}$ ways, where the equality follow
from~(iii). Finally we need to choose one of the pointed
sets~$(S_i,s_i)$ to be distinguished, which can be done in~$m$
ways. This concludes the verification of~\eqref{eqn:numZe}.

For any subset~$S\subset\ZZ/n\ZZ$, we let
\[
  S+1 := \{ s+1\bmod n : s\in S \}.
\]
We define an ``increment-by-$1$'' operation on~$Z(\bfe)$ by
the formula
\begin{multline*}
  \hfill I : Z(\bfe) \xrightarrow{\;\;\sim\;\;} Z(\bfe), \hfill\\
  I\Bigl((S_1,s_1), \Bigl\{(S_2,s_2),\cdots,(S_m,s_m)\Bigr\} \Bigr) \hfill\\
  \hfill{} =   \Bigl((S_1+1,s_1+1), \Bigl\{(S_2+1,s_2+1),\cdots,(S_m+1,s_m+1)\Bigr\} \Bigr).
\end{multline*}
Now the key observation, for which we thank Melody Chan, is that the
increase-by-1 operator is a permutation of the set $Z(\bfe)$ for which
the orbit of every element has size exactly~$n$. This will obviously
imply that~$n$ divides~$\#Z(\bfe)$, so it remains to prove that truth
of this observation.

Since we are working in~$\ZZ/n\ZZ$, it is clear that the~$n$'th
iterate~$I^n$ acts as the identity map on~$Z(\bfe)$, so it suffices to
prove that if~$I^k$ fixes an element
\[
  \Bigl((S_1,s_1), \Bigl\{(S_2,s_2),\cdots,(S_m,s_m)\Bigr\} \Bigr) \in Z(\bfe),
\]
then~$n\mid k$. But if~$I^k$ fixes this element, then
incrementing by~$k$ must fix the distinguished point~$s_1$ of the
distinguished pointed subset~$(S_1,s_1)$, i.e., 
\[
  s_1\equiv s_1+k \pmodintext{n}.
\]
Hence~$n\mid k$, which completes the proof of Lemma~\ref{lemma:combin}.
\end{proof}

\begin{question}
Continuing with the notation in Lemma~\ref{lemma:combin},
if $e_1<n$, is it true that we always have
\[
  n! \prod_{j=1}^n \frac{1}{j!^{e_j}\cdot e_j!} \cdot \prod_{j=1}^n j^{e_j}
  \equiv 0 \pmod{n} ?
\]
We have verified this stronger result experimentally for various
values of~$n$ and~$\bfe$.
\end{question}

\section{Proof of Theorem \ref{theorem:fphiinRp}}
\label{section:pforthmm}

We start by setting some notation for truncating and for picking out
coefficients of power series. This notation will be used in this
section and in Section~\ref{section:nonint}.  For any power
series~$P(x)=\sum_{k\ge0} c_kx^k$, we write
\begin{equation}
  \label{eqn:xkcoeff}
  P^{[k]}(x) = \sum_{i=0}^k c_ix^i \quad\text{and}\quad   P(x)[x^k] = c_k.
\end{equation}

\begin{proof}[Proof of Theorem~$\ref{theorem:fphiinRp}$]
Write
\begin{equation}
  \label{eqn:phixp1ppsi}
  \f(x) = x^p \bigl( 1 + p\psi(x) \bigr)\quad\text{with $\psi(x)\in xR[\![x]\!]$.}
\end{equation}
We assume that $f_\f^{[k]}(x)\in R[x]$, and we proceed to prove
that $f_\f^{[k+1]}(x)\in R[x]$.  To ease notation, we write
\[
  f(x) = f_\f^{[k]}(x) + \b x^{k+1} + \cdots\quad\text{with $\b=a_{k+1}$.}
\]
Then~$\b$ is determined by its appearance in the $x^{p+k}$ coefficient
of the defining relation~\eqref{eqn:bottdef}. Thus
\begin{align*}
  0 &= \Bigl( \f\bigl(f_\f^{[k]}(x)+\b x^{k+1}\bigr) - f_\f^{[k]}(x^p) \Bigr) [x^{p+k}]
     \quad\text{using \eqref{eqn:bottdef},} \\*
  &= \Bigl( \bigl(f_\f^{[k]}(x)+\b x^{k+1}\bigr)^p \bigl( 1 + p \psi\bigl(f_\f^{[k]}(x)+\b x^{k+1}\bigr) \bigr) 
     - f_\f^{[k]}(x^p) \Bigr) [x^{p+k}] \\*
  &\omit\hfil\text{using \eqref{eqn:phixp1ppsi},} \\
  &= \Bigl( \bigl( f_\f^{[k]}(x)^p + p\b f_\f^{[k]}(x)^{p-1} x^{k+1} \bigr) \bigl( 1 + p \psi\bigl(f_\f^{[k]}(x)+\b x^{k+1}\bigr) \bigr) \\*
     &\omit\hfil${}- f_\f^{[k]}(x^p) \Bigr) [x^{p+k}]$\\*
     &\omit\hfill since $f_\f^{[k]}(x)\in xR[x]$, \\
  &= \Bigl( \bigl( f_\f^{[k]}(x)^p + p\b x^{p+k} \bigr) \bigl( 1 + p \psi\bigl(f_\f^{[k]}(x)+\b x^{k+1}\bigr) \bigr) 
         - f_\f^{[k]}(x^p) \Bigr) [x^{p+k}] \\*
     &\omit\hfill since $f_\f^{[k]}(x)\in x+x^2R[x]$, \\
  &= \Bigl( \bigl( f_\f^{[k]}(x)^p + p\b x^{p+k} \bigr) \bigl( 1 + p \psi\bigl(f_\f^{[k]}(x)\bigr) \bigr) 
         - f_\f^{[k]}(x^p) \Bigr) [x^{p+k}] \\*
     &\omit\hfill since $\bigl( f_\f^{[k]}(x)^p + p\b x^{p+k} \bigr)\in x^pR[x]$ and $\psi(x)\in xR[x]$,\\
  &= \Bigl(  f_\f^{[k]}(x)^p + p\b x^{p+k}  +  f_\f^{[k]}(x)^p  p \psi\bigl(f_\f^{[k]}(x)\bigr)
         - f_\f^{[k]}(x^p) \Bigr) [x^{p+k}]\\*
     &\omit\hfill since  $\psi\bigl(f_\f^{[k]}(x)\bigr)\in xR[x]$.
\end{align*}
Hence
\[
  \b = a_{k+1}
  = \frac{1}{p}\Bigl(f_\f^{[k]}(x^p)-f_\f^{[k]}(x)^p\Bigr)[x^{p+k}]
  - \Bigl(f_\f^{[k]}(x)^p \psi\bigl(f_\f^{[k]}(x)\bigr)\Bigr)[x^{p+k}].
\]
The assumptions that~$f_\f^{[k]}$ and $\psi(x)$ have coefficients
in~$R$ imply that the second term has coefficients in~$R$. As for the
first term, it has coefficients in~$R$, since for any
polynomial~$F(x)\in R[x]$, we have
\begin{equation}
  \label{eqn:abpapbp}
  F(x)^p \equiv F(x^p) \pmod{pR[x]}.
\end{equation}
We note that~\eqref{eqn:abpapbp} is the property that requires the
assumption that $a^p\equiv a\pmodintext{pR}$ for every~$a\in R$.
Without that assumption, we would only have $F(x)^p \equiv \tilde
F(x^p) \pmodintext{pR[x]}$, where~$\tilde F$ is obtained from~$F$ by
raising its coefficients to the $p$th power.
This completes the proof of Theorem~\ref{theorem:fphiinRp}. 
\end{proof}

\section{Proof of Theorem \ref{theorem:compm}}
\label{section:pforthmp}

We start with an elementary, but useful, description of the $k$'th
derivative of the $m$'th power of a function.  For notational
convenience, we let $D_x$ denote differentiation with respect
to~$x$. In particular, we note that~$D_x$ operates formally on power
series rings such as~$K[\![x]\!]$.

\begin{definition}
For integers $m\ge1$ and $k\ge1$, define a submodule of the graded
polynomial ring~$\ZZ[T_0,T_1,T_2,\ldots,T_r]$ by
\begin{multline*}
  \ZZ[T_0,T_1,T_2,\ldots,T_r]^{[m,k]} \\*
  := \operatorname{Span}_\ZZ
  \left\{ T_0^{e_0}T_1^{e_1}T_2^{e_2}\cdots T_r^{e_r} :
  \sum_{\ell=0}^r e_\ell = m \text{ and } \sum_{\ell=0}^r\ell e_\ell = k \right\}.
\end{multline*}
In other words, $\ZZ[T_0,\ldots,T_r]^{[m,k]}$ is the span of the
monomials of degree~$m$ and weight~$k$, according to the grading
$\wt(T_\ell)=\ell$.
\end{definition}


\begin{lemma}
\label{lemma:Dxkymstrong}
Let $y$ be a sufficiently differentiable function of $x$, let $k\ge1$,
and let $m\ge1$.  There is a polynomial
$\D_{m,k}\in\ZZ[T_0,\ldots,T_{k-1}]^{[m,k]}$ such that
\[
  D_x^k(y^m) = my^{m-1}D_x^k(y) +  m\D_{m,k}(y,D_x^{\vphantom1}y,D_x^2y,\ldots,D_x^{k-1}y).
\]
\end{lemma}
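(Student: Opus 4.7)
The plan is to induct on $k$ with $m$ held fixed. For the base case $k=1$, the familiar identity $D_x(y^m) = my^{m-1}D_xy$ tells us to take $\Delta_{m,1}=0$, which vacuously lies in $\ZZ[T_0]^{[m,1]}=\{0\}$ (no monomial of degree $m$ in $T_0$ alone can have weight~$1$).

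For the inductive step, I would differentiate the level-$k$ formula once more and apply the product rule to obtain
\[
D_x^{k+1}(y^m) = my^{m-1}D_x^{k+1}y + m(m-1)y^{m-2}(D_xy)(D_x^k y) + m\,D_x\!\left[\Delta_{m,k}(y,D_xy,\ldots,D_x^{k-1}y)\right].
\]
The first summand is the designated leading term at level $k+1$. To absorb the other two into a single expression of the form $m\,\Delta_{m,k+1}(y,D_xy,\ldots,D_x^k y)$, I would introduce the formal $\ZZ$-linear derivation $\partial$ on $\ZZ[T_0,T_1,\ldots]$ characterized by $\partial T_\ell = T_{\ell+1}$; the chain rule shows that $\partial$ intertwines with $D_x$ under the substitution $T_\ell \mapsto D_x^\ell y$. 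Since $\partial$ preserves total degree in the $T_\ell$'s and raises each monomial's weight by exactly $1$, it sends $\ZZ[T_0,\ldots,T_{k-1}]^{[m,k]}$ into $\ZZ[T_0,\ldots,T_k]^{[m,k+1]}$. Hence the third summand above, after substitution, is $m$ times the evaluation of an element of $\ZZ[T_0,\ldots,T_k]^{[m,k+1]}$. The second summand corresponds to the monomial $(m-1)T_0^{m-2}T_1T_k$, which has degree $(m-2)+1+1=m$, weight $0+1+k=k+1$, and integer coefficient $m-1$, so it lies in the same graded piece. Their sum is the required $\Delta_{m,k+1}$.

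The main obstacle, to the extent there is one, is purely bookkeeping: one must check that the Leibniz coefficient $m(m-1)$ splits cleanly as $m\cdot(m-1)$, with the factor of $m$ pulled outside $\Delta_{m,k+1}$ and the integer $m-1$ retained inside, and that $\partial$ preserves both integrality of coefficients and the $(\textup{degree},\textup{weight})$ bigrading. Both verifications are immediate from the definitions, so the induction closes without further effort.
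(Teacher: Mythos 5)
Your proof is correct and follows essentially the same route as the paper: induction on $k$, differentiating the level-$k$ identity and observing that $\Delta_{m,k+1}$ collects the Leibniz cross-term $(m-1)T_0^{m-2}T_1T_k$ together with the derivative of $\Delta_{m,k}$, which preserves degree and raises weight by one. You make the bookkeeping more explicit via the formal derivation $\partial$ on $\ZZ[T_0,T_1,\ldots]$, but the underlying argument is the same as the paper's.
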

\begin{proof}
We induct on~$k$. For~$k=1$ this is just the chain rule
$D_x(y^m)=my^{m-1}D_x(y)$, so $\D_{m,1}=0$.  Assume true for~$k$.
We observe that
\[
  D_x\bigl(my^{m-1}D_x^k(y)\bigr)
  = my^{m-1}D_x^{k+1}(0) + m(m-1)y^{m-2}D_x(y)D_x^k(y),
\]
while differentiating a monomial of degree~$m$ and weight~$k$ yields a
sum of monomials that have degree~$m$ and weight~$k+1$.  Hence
$D_x^{k+1}(y^m)$ has the desired form.
\end{proof}

\begin{remark}
We note that Lemma~\ref{lemma:Dxkymstrong} may also be derived as a
special case of the classical formula of Fa\'a di Bruno
for the $k$'the derivative of a
composition of functions.
Thus setting 
$F(x)=x^m$, with~$G(x)=y$
in Lemma~\ref{lemma:dibruno},
we see that the term in~\eqref{eqn:faabruno}
with~$e_k=1$, and necessarily $e_1=\cdots=e_{k-1}=0$, is
\[
  \frac{k!}{e_k!(k!)^{e_k}} (D_xF)(y)\cdot (D_x^k(y))^{e_k}
  = my^{m-1} D_x^k(y),
\]
matching the initial term in Lemma~\ref{lemma:Dxkymstrong}. And the
other terms have the desired form once we note that for any
$\ell\ge1$, the term
\[
  (D_x^\ell F)(y) = m(m-1)\cdots(m-\ell+1)y^{m-\ell}
\]
has the desired factor of~$m$, and that the quantity $(D^j_xy)/j!$ is
integral.
\end{remark}  

We are now ready to prove Theorem~$\ref{theorem:compm}$.

\begin{proof}[Proof of Theorem~$\ref{theorem:compm}$]
It turns out to be
somewhat easier to work with the inverse B\"ottcher coordinate, so we
let $F(x) = f_\f^{-1}(x)\in K[\![x]\!]$. Thus~$F$ is determined by the
functional equation
\begin{equation}
  \label{eqn:FphiFm}
  F\bigl(\f(x)\bigr) = F(x)^m.
\end{equation}
We define series~$\nu(x)$ and~$G(x)$ by
\[
  \f(x) = x^m\nu(x) = x^m\sum_{i=0}^\infty b_i x^i
  \quad\text{and}\quad
  F(x) = xG(x) = x\sum_{\ell=0}^\infty \frac{c_\ell}{\ell!}x^\ell.
\]
Substituting these expressions for~$F$ and~$\f$
into~\eqref{eqn:FphiFm} and canceling~$x^m$ yields
\begin{equation}
  \label{eqn:nuGxmnuGm}
  \nu(x) G\bigl(x^m\nu(x)\bigr) = G(x)^m.
\end{equation}
This functional equation determines the coefficients of~$G(x)$ in terms of
the coefficients of~$\nu(x)$.

We are going to take the $k$'th derivative of both sides
of~\eqref{eqn:nuGxmnuGm} and evaluate at~$x=0$.  For the right-hand
side we use Lemma~\ref{lemma:Dxkymstrong} with $y=G(x)$. Noting that
$(D_x^\ell G)(0)=c_\ell$ and $c_0=1$,  we obtain
\begin{equation}
  \label{equation:DkxGxmx0}
  D_x^k\bigl(G(x)^m\bigr) \Bigr|_{x=0} 
  = mc_k + m \D_{m,k}(c_0,c_1,\ldots,c_{k-1}),
\end{equation}
where $\D_{m,k}(T_0,\ldots,T_{k-1})\in\ZZ[T_0,\ldots,T_{k-1}]^{[m,k]}$
is as defined in Lemma~\ref{lemma:Dxkymstrong}.

In order to handle the left-hand side of~\eqref{eqn:nuGxmnuGm}, we
expand~$G\bigl(x^m\nu(x)\bigr)$ as a series and differentiate. Thus
\begin{align*}
  D_x^k\Bigl(\nu(x)G\bigl(x^m\nu(x)\bigr)\Bigr)
  &= D_x^k \left( \nu(x) \sum_{\ell=0}^\infty \frac{c_\ell}{\ell!}\bigl(x^m\nu(x)\bigr)^\ell \right) \\*
  &= \sum_{\ell=0}^\infty \frac{c_\ell}{\ell!} D_x^k\Bigl(x^{m\ell} \nu(x)^{\ell+1}\Bigr) \\*
  &= \sum_{\ell=0}^\infty \frac{c_\ell}{\ell!} \sum_{i+j=k} \binom{k}{i} D_x^i(x^{m\ell})\cdot D_x^j\bigl(\nu(x)^{\ell+1}\bigr).
\end{align*}

Evaluating at $x=0$, we note that
\[
  D_x^i(x^{m\ell})\bigr|_{x=0} = \begin{cases} (m\ell)!&\text{if $i=m\ell$,}\\ 0&\text{otherwise,}\\ \end{cases}
\]
so the only term that remains in the inner sum is
$(i,j)=(m\ell,k-m\ell)$. Further, since $0\le j\le k$, this term
appears only if $\ell\le k/m$. Hence
\[
  \left.   D_x^k\Bigl(\nu(x)G\bigl(x^m\nu(x)\bigr)\Bigr) \right|_{x=0}
  = \sum_{\ell=0}^{\lfloor k/m\rfloor} \frac{c_\ell}{\ell!}\cdot\binom{k}{m\ell}
  (m\ell)!\cdot  D_x^{k-m\ell}\bigl(\nu(x)^{\ell+1}\bigr) \Bigr|_{x=0}.
\]

We next observe that
\[
  \frac{(m\ell)!}{m!\ell!} \in \ZZ\quad\text{for $\ell\ge1$,}
\]
so pulling off the $\ell=0$ term and using the fact that~$c_0=1$, we find that
\begin{equation}
  \label{eqn:DxknuGxmx0}
  \left.   D_x^k\Bigl(\nu(x)G\bigl(x^m\nu(x)\bigr)\Bigr) \right|_{x=0}
  \in k!b_k +  \sum_{\ell=1}^{\lfloor k/m\rfloor} m!\cdot c_\ell \cdot \ZZ[b_1,b_2,\ldots].
\end{equation}

Subsituting~\eqref{equation:DkxGxmx0} and \eqref{eqn:DxknuGxmx0} into
the $k$'th derivative of~\eqref{eqn:nuGxmnuGm} evaluated at~$x=0$, and
dividing by~$m$, we obtain 
\begin{equation}
  \label{eqn:ckDmk}
  c_k \in  -\D_{m,k}(c_0,c_1,\ldots,c_{k-1}) + 
  \frac{k!b_k}{m} + \sum_{\ell=1}^{\lfloor k/m\rfloor}
  c_\ell \cdot \ZZ[b_1,b_2,\ldots] .
\end{equation}

We now proceed by induction, starting from~$c_0=1$. If we assume that
$k!b_k\in mR$, as in part~(b), then it is clear from~\eqref{eqn:ckDmk}
that
\[
  c_0,c_1,\ldots, c_{k-1}\in R \Longrightarrow c_k\in R.
\]
Hence under the assumption in~(b), we see
that~$f_\f^{-1}(x)=\sum c_kx^k/k!$ with $c_k\in R$.

In order to prove~(a), our induction hypothesis is that $c_\ell\in
m^{-\ell}R$ for all $0\le\ell<k$, and our goal is to conclude that
$c_k\in m^{-k}R$. We consider the integrality properties of each of
the terms in~\eqref{eqn:ckDmk}. First, the
term~$\D_{m,k}(c_0,\ldots,c_{k-1})$ is a $\ZZ$-linear combination of
monomials of the form
\[
  c_0^{e_0}c_1^{e_1}\cdots c_{k-1}^{e_{k-1}}\quad\text{with}\quad \sum_{\ell=0}^{k-1} \ell e_\ell = k.
\]
Writing $c_\ell=\g_\ell/m^\ell$ with $\g_\ell\in R$, we see that
\[
  c_0^{e_0}c_1^{e_1}\cdots c_{k-1}^{e_{k-1}}
  = \frac{\g_0^{e_0}\g_1^{e_1}\cdots \g_{k-1}^{e_{k-1}}}{ m^{0\cdot e_0+1\cdot e_1+2\cdot e_2+\cdots+{(k-1)}e_{k-1}} }
  = \frac{\g}{m^k}\quad\text{with $\g\in R$.}
\]
Hence $\D_{m,k}(c_0,\ldots,c_{k-1})\in m^{-k}R$.

Next, since $k\ge1$ and $b_k\in R$, we see that $k!b_k/m\in
m^{-k}R$. Finally, we note that $c_\ell\cdot\ZZ[b_1,b_2,\ldots]
\subset m^{-\ell}R$ with $\ell\le k/m<k$, so these terms are also
in~$m^{-k}R$. This concludes the proof by induction that $c_k\in m^{-k}R$ for all $k\ge1$.

This completes the proof that the inverse B\"ottcher
coordinate~$f_\f^{-1}(x)$ has the desired form.  To complete the proof
of Theorem~\ref{theorem:compm}, we need merely note that
Proposition~\ref{proposition:finverseform}(c,d) then tells us
that~$f_\f(x)$ also has the desired form.
\end{proof}


\section{Proof of Theorem~$\ref{theorem:radiusp2}$}
\label{section:nonint}

\begin{proof}[Proof of Theorem~$\ref{theorem:radiusp2}$]
We will make frequent use of Legendre's formula for the valuation of a
factorial. Writing the base-$p$ expansion of a non-negative
integer~$N$ as
\[
  N=\sum_{i\ge0}N_ip^i\quad\text{with}\quad 0\le N_i<p,
  \quad\text{we let}\quad
  S_p(N) := \sum_{i\ge0} N_i.
\]
Then Legendre's formula says that
\begin{equation}
  \label{eqn:ordpkfact}
  \ord_p(N!) = \frac{N-S_p(N)}{p-1}.
\end{equation}
In particular, 
\[
  \text{$N$ is a power of $p$}\quad\Longrightarrow\quad
  \ord_p(N!) = \frac{N-1}{p-1}.
\]

We note that~$a_0=1$, and one easily checks that $a_1=-1$.  We are
going to prove that if~$k$ is a non-trivial power of~$p$, then
\[
  a_k\equiv a_{k/p}\pmodintext{p}.
\]
Combined with the initial value~$a_1=-1$, this clearly implies that
$a_k\equiv -1\pmodintext{p}$ for all~$k$ that are powers of~$p$.

So we take~$k$ to be a power of~$p$ with $k\ge p$. To ease notation
(and with a view to generalizations), we let
\[
  q := p^2.
\]
Expanding the B\"ottcher equation $\f\bigl(f_\f(x)\bigr)=f(x^q)$, we
find that the coefficient of~$x^k$ gives us a recursive formula
for~$a_k$ in terms of the lower~$a_i$. More specifically, we can write~$a_k$
as a sum of three terms
\begin{equation}
  \label{eqn:akAkBkCk}
  a_k = A_k[x^k] - B_k[x^k] - C_k[x^k],
\end{equation}
where as in~\eqref{eqn:xkcoeff}, we write~$P[x^k]$ for the coefficient
of~$x^k$ in a polynomial or power series~$P(x)$, and
where~$A_k$,~$B_k$, and~$C_k$ are given by the formulas
\[
  A_k := \frac{k!}{q}\sum_{\ell=0}^{k-1} \frac{a_\ell}{\ell!}x^{q\ell},\hspace{.5em}
  B_k := \frac{k!}{q}\left(\sum_{\ell=0}^{k-1} \frac{a_\ell}{\ell!}x^{\ell}\right)^q  \!\!,\hspace{.5em}
  C_k := k!\left(x\sum_{\ell=0}^{k-1} \frac{a_\ell}{\ell!}x^{\ell}\right)^{q+1} \!\!.
\]

We start by analyzing $B_k[x^k]$, since this is the term in which we
will find a single monomial in the~$a_i$ whose coefficient is prime
to~$p$.  Expanding the~$q$'th power that defines~$B_k$ yields
\begin{multline}
  \label{eqn:Bkxkformula}
  B_k[x^k] = \frac{k!}{q} \sum_{\substack{e_0+e_1+\cdots+e_{k-1}=q\\
      0\cdot e_0+1\cdot e_1+\cdots+(k-1)e_{k-1}=k\\}}
  \binom{q}{e_0,e_1,\ldots,e_{k-1}}  \\*
  {}\cdot \left(\frac{a_0}{0!}\right)^{e_0}
  \left(\frac{a_1}{1!}\right)^{e_1}
  \left(\frac{a_2}{2!}\right)^{e_2}
  \cdots
  \left(\frac{a_{k-1}}{(k-1)!}\right)^{e_{k-1}}.
\end{multline}
For any given $\bfe:=(e_0,\ldots,e_{k-1})$, the coefficent of
$\bfa^\bfe:=a_0^{e_0}\cdots a_{k-1}^{e_{k-1}}$ in~$B_k[x^k]$ is
\begin{equation}
  \label{eqn:Bkxkae}
  B_k[x^k][\bfa^\bfe]
  :=
  \frac{k!}{q} \cdot \binom{q}{e_0,e_1,\ldots,e_{k-1}} \cdot \prod_{n=0}^{k-1} \frac{1}{n!^{e_n}}.
\end{equation}
We claim that
\begin{align}
  \ord_p B_k[x^k][\bfa^\bfe] &= 0 &&\text{if $\bfa^\bfe=a_0^{q-p}a_{k/p}^p$,}
  \label{eqn:ordpBkeq0} \\*
  \ord_p B_k[x^k][\bfa^\bfe] &> 0 &&\text{otherwise.}
    \label{eqn:ordpBkgt0} 
\end{align}
In~\eqref{eqn:Bkxkae} we expressed $B_k[x^k][\bfa^\bfe]$ as a product
of three terms.  We compute the valuations of these three terms, using
the fact that~$k$ and~$q$ are powers of~$p$, and using Legendre's
formula~\eqref{eqn:ordpkfact} to compute the valuations of various
factorials.  Thus
\begin{align*}
  \ord_p\left(\frac{k!}{q}\right) &= \frac{k-1}{p-1} - \ord_p(q), \\
  \ord_p \binom{q}{e_0,e_1,\ldots,e_{k-1}}
  &= \ord_p(q!)-\sum_{n=0}^{k-1} \ord_p(e_n!) \\*
  &= \frac{1}{p-1}\left( (q - 1) - \sum_{n=0}^{k-1} \bigl(e_n - S_p(e_n)\bigr) \right) \\
  &= \frac{1}{p-1}\left(  - 1 + \sum_{n=0}^{k-1}  S_p(e_n) \right) \\*
  &\omit\hfill using $\sum e_n = q$, \\
  \ord_p\left( \prod_{n=0}^{k-1} \frac{1}{n!^{e_n}}\right)
  &= -\frac{1}{p-1} \sum_{n=0}^{k-1} e_n\bigl(n - S_p(n)\bigr)\\*
  &= \frac{1}{p-1} \left( - k +  \sum_{n=0}^{k-1} e_n S_p(n) \right) \\*
  &\omit\hfill using $\sum ne_n=k$.
\end{align*}
Adding these three pieces and using $\ord_p(q)=2$, we find that
\begin{equation}
  \label{eqn:ordpBka}
  (p-1)\ord_p B_k[x^k][\bfa^\bfe]
  =     - 2p  +  \sum_{n=0}^{k-1} \bigl( e_n S_p(n) + S_p(e_n)  \bigr).
\end{equation}
As a warm-up, we use~\eqref{eqn:ordpBka} to
prove~\eqref{eqn:ordpBkeq0}, which is one of our claims. Thus
\begin{align*}
  (p-1)\ord_p B_k[x^k][a_0^{q-p}a_{k/p}^p]
   &=  - 2p  +  S_p(p^2-p) + p S_p(k/p) + S_p(p)\\*
   &= - 2p + (p-1) + p + 1 = 0.
\end{align*}

Before proceeding, we are going to rewrite~\eqref{eqn:ordpBka} to exploit
the fact that~$S_p(n)\ge1$ for all $n\ge1$. So we pull the $n=0$ term out of the
sum, replace $S_p(n)$ with $S_p(n)-1+1$, and use the fact that $\sum e_n=q=p^2$.
This yields
\begin{multline*}
  (p-1)\ord_p B_k[x^k][\bfa^\bfe] \\
  = p^2 - 2p - e_0 + S_p(e_0) +
  \smash[tb]{\sum_{n=1}^{k-1} \bigl( e_n (S_p(n)-1) + S_p(e_n) \bigr).}
\end{multline*}
Hence
\begin{multline*}
  \ord_p B_k[x^k][\bfa^\bfe] \le 0 \\*
  \quad\Longleftrightarrow\quad
  \sum_{n=1}^{k-1} \bigl( e_n (S_p(n)-1) + S_p(e_n) \bigr) \le e_0 - p^2 + 2p - S_p(e_0).
\end{multline*}
In particular, we have $e_0\ge p^2-2p$,
while $\sum e_n=p^2$ combined with
$\sum ne_n=k>0$ tell us that $e_0<p^2$.
Thus $p^2-2p\le e_0 < p^2$. We split this interval into two pieces.
Thus for $0\le j<p$, we have
\[
\begin{array}{|c|c|c|} \hline
  e_0 & S_p(e_0) & e_0 - p^2+2p-S_p(e_0) \\ \hline\hline
  p^2-2p+j & p-2+j & 2-p \\ \hline
  p^2-p+j & p-1+j & 1 \\ \hline
\end{array}
\]
Since $p\ge2$,  we have proven that
\begin{multline*}
  \ord_p B_k[x^k][\bfa^\bfe] \le 0 \\*
  \quad\Longrightarrow\quad
  \sum_{n=1}^{k-1} \bigl( e_n (S_p(n)-1) + S_p(e_n) \bigr) \le
  \begin{cases}
    0 &\text{if $p^2-2p\le e_0<p^2-p$,} \\
    1 &\text{if $p^2-p\le e_0<p^2$.} \\
  \end{cases}
\end{multline*}
We know from $\sum ne_n=k$ that there exists at least one~$m\ge1$ such
that $e_m\ge1$, and for each such~$m$ we have $S_p(e_m)\ge1$.
Thus there is a unique~$m\ge1$ with $e_m\ge1$.
Further,
we observe that if~$e_m$ is not a power of~$p$, then $S_p(e_m)\ge2$,
so we conclude that~$e_m$ is a power of~$p$. Also, since the sum
is strictly positive, we see that  $p^2>e_0\ge p^2-p$.
We now know the following three facts:
\[
  (1)~p^2 = e_0 + e_m.
  \qquad
  (2)~p^2>e_0\ge p^2-p.
  \qquad
  (3)~\text{$e_m$ is a power of $p$.}
\]
Thus~$e_m$ is a power of~$p$ satisfying $p\ge e_m>0$, which proves
that~$e_m=p$. Then
\[
  k = \sum_{n=0}^{k-1} ne_n = me_m
  \quad\Longrightarrow\quad
  m = k/e_m = k/p.
\]
This proves that
\[
  \ord_p B_k[x^k][\bfa^\bfe] \le 0
  \quad\Longrightarrow\quad
  \bfa^\bfe = a_0^{q-p}a_{k/p}^p,
\]
which concludes the proof of~\eqref{eqn:ordpBkgt0}.

We next consider~$A_k$. For $k$ a power of~$p$, we have
\[
A_k[x^k] = \begin{cases}
  0 & \text{if $k<q$,} \\
  \dfrac{k! a_{k/q}}{q(k/q)!} & \text{if $k\ge q$.} \\
\end{cases}
\]
For $k\ge q = p^2$ we compute
\begin{align*}
  \ord_p\left( \frac{k!}{q(k/q)!} \right)
  &= \frac{k-1}{p-1} - 2 - \frac{k/q-1}{p-1} \\*
  &= (p-1) \left( \frac{k}{q}(p+1) - 2 \right)
  > (p-1)^2 > 0.
\end{align*}
Hence
\[
  A_k[x^k] \equiv 0 \pmod{p}.
\]

We next consider~$C_k[x^k]$. Expanding the power, and noting that
there is an~$x^{q+1}$ that comes out, we find a formula similar to
formula~\eqref{eqn:Bkxkformula} for~$B_k[x^k]$, the primary difference
being that there is no~$1/q$ and the sum is over a different
collection of indices. With the obvious notation for the multinomial
coefficient, we have
\[
  C_k[x^k] = k! \sum_{\substack{e_0+e_1+\cdots+e_{k-1}=q+1\\
      0\cdot e_0+1\cdot e_1+\cdots+(k-1)e_{k-1}=k-q-1\\}}
  \binom{q}{\bfe} \prod_{n=0}^{k-1} 
  \left(\frac{a_n}{n!}\right)^{e_n}.
\]
For any given $\bfe:=(e_0,\ldots,e_{k-1})$, the coefficent of
$\bfa^\bfe:=a_0^{e_0}\cdots a_{k-1}^{e_{k-1}}$ in~$C_k[x^k]$ is
\begin{equation}
  \label{eqn:Ckxkae}
  C_k[x^k][\bfa^\bfe]
  = k! \cdot \binom{q}{e_0,e_1,\ldots,e_{k-1}} \cdot \prod_{n=0}^{k-1} \frac{1}{n!^{e_n}}.
\end{equation}
We claim that~$C_k[x^k][\bfa^\bfe]$ is always divisible by~$p$. To see
this, we compute
\begin{align*}
  (p &- 1)\ord_p C_k[x^k][\bfa^\bfe] \\*
  &= (k-1) + (q-1) - \sum_{n=0}^{k-1} \bigl(e_n-S_p(e_n)\bigr) - \sum_{n=0}^{k-1} e_n\bigl(n-S_p(n)\bigr) \\
  &= (k-1) + (q-1) - (q+1) - (k-q-1) + \sum_{n=0}^{k-1} \bigl(S_p(e_n)+e_nS_p(n)\bigr) \\*
  &=  q-2   + \sum_{n=0}^{k-1} \bigl(S_p(e_n)+e_nS_p(n)\bigr).
\end{align*}
Since $q=p^2\ge4$, this quantity is certainly positive.

Combining our computations for~$A_k[x^k]$,~$B_k[x^k]$, and~$C_k[x^k]$,
we have proven that for~$k$ a power of~$p$, the
recursion~\eqref{eqn:akAkBkCk} for~$a_k$ in terms of~$a_i$ with~$i<k$
has the following form:
\begin{equation}
  \label{eqn:akakpp}
  a_k \equiv -\frac{k!}{q}\cdot \binom{q}{q-p,p}
  \cdot \left(\frac{a_0}{0!}\right)^{q-p}
  \cdot \left(\frac{a_{k/p}}{(k/p)!}\right)^{p}
  \pmod{p}.
\end{equation}

In order to simplify~\eqref{eqn:akakpp}, we set a useful
piece of notation. For $N\ge1$, we let
\[
  T_p(N) := \text{prime-to-$p$ part of $N$} = N/p^{\ord_p(N)}.
\]
Applying~$T_p$ to~\eqref{eqn:akakpp} yields
\[
  \frac{ a_k }{ a_{k/p}^p }
  = -\frac{ T_p(k!) T_p(q!) } { T_p((q-p)!) T_p(p!) T_p((k/p)!)^p },
\]
and since we are only interested in the value of~$T_p(a_k)\bmod p$, we can use
Fermat's little theorem to simplify this to
\begin{equation}
  \label{eqn:TpakTpakpp}
  \frac{ a_k }{ a_{k/p} }
  \equiv -\frac{ T_p(k!) T_p(q!) } { T_p((q-p)!) T_p(p!) T_p((k/p)!) } \pmod{p}.
\end{equation}
It remains to compute the congruence class of the right-hand side
of~\eqref{eqn:TpakTpakpp}, for which we use the following elementary
lemma.

\begin{lemma}
\label{lemma:Tppr}
For all primes $p$ and all $r\ge0$, we have
\[
  T_p\bigl((p^r)!\bigr) \equiv (-1)^r \pmod{p}.
\]
\end{lemma}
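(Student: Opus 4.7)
The plan is to prove Lemma~\ref{lemma:Tppr} by induction on $r$, bootstrapping from Wilson's theorem $(p-1)! \equiv -1 \pmod{p}$. The base case $r=0$ is trivial since $T_p(1)=1=(-1)^0$.

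For the inductive step, I would split the factorial $(p^r)!$ according to whether its factors are divisible by $p$, writing
\[
  (p^r)! = \left(\prod_{\substack{1\le n\le p^r\\ p\nmid n}} n\right) \cdot \prod_{k=1}^{p^{r-1}} pk
  = \left(\prod_{\substack{1\le n\le p^r\\ p\nmid n}} n\right) \cdot p^{p^{r-1}} \cdot (p^{r-1})!.
\]
Peeling off the $p$-power factor $p^{p^{r-1}}$ and applying $T_p$ gives the multiplicative recursion
\[
  T_p\bigl((p^r)!\bigr) = \left(\prod_{\substack{1\le n\le p^r\\ p\nmid n}} n\right) \cdot T_p\bigl((p^{r-1})!\bigr).
\]
The coprime-to-$p$ residues in $\{1,2,\ldots,p^r\}$ consist of each nonzero residue class modulo $p$ repeated exactly $p^{r-1}$ times, so Wilson's theorem yields
\[
  \prod_{\substack{1\le n\le p^r\\ p\nmid n}} n \;\equiv\; \bigl((p-1)!\bigr)^{p^{r-1}} \;\equiv\; (-1)^{p^{r-1}} \pmod{p}.
\]

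Combining these with the inductive hypothesis gives
\[
  T_p\bigl((p^r)!\bigr) \equiv (-1)^{p^{r-1}}\cdot(-1)^{r-1} \pmod{p}.
\]
For odd $p$ we have $p^{r-1}$ odd, so the exponent is $1+(r-1)=r$ and the claim follows. For $p=2$ the congruence is modulo $2$, where $\pm1$ coincide and $(-1)^r\equiv 1\pmod{2}$, so the claim is automatic. I do not anticipate a real obstacle here; the only subtlety is remembering to handle $p=2$ separately (where $(-1)^{p^{r-1}}$ is not uniformly $-1$), but the mod-$2$ collapse makes this moot.
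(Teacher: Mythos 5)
Your proof is correct and is essentially the paper's argument: the paper groups the factors of $(p^r)!$ by their $p$-adic valuation into one double product and applies Wilson's theorem to each layer, which is exactly what your induction produces when unrolled. Both treatments also dispose of $p=2$ in the same way (the paper assumes $p$ odd, you note the mod-$2$ collapse).
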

\begin{proof}
We may assume that~$p$ is odd.  We group the numbers from~$1$ to~$p^r$
by their~$p$-adic valuation. This yields
\begin{align*}
  T_p\bigl((p^r)!\bigr)
  &= \prod_{j=0}^{r-1} \prod_{\substack{1\le n< p^{r-j}\\ p\nmid n\\}} T_p(p^j n)
  = \prod_{j=0}^{r-1} \prod_{\substack{1\le n< p^{r-j}\\ p\nmid n\\}} n \\
  &\equiv \prod_{j=0}^{r-1} \bigl((p-1)!\bigr)^{p^{r-j-1}}  \pmod{p} \\
  &\equiv \prod_{j=0}^{r-1} (-1)^{p^{r-j-1}}  \pmod{p} \quad\text{Wilson's theorem,} \\
  &\equiv (-1)^r\pmod{p}.
\end{align*}
This completes the proof of Lemma~\ref{lemma:Tppr}.
\end{proof}

Lemma~\ref{lemma:Tppr} allows us to evaluate every factor
in~\eqref{eqn:TpakTpakpp} except $(q-p)!$. For that term, we use the
lemma with~$q=p^2$ and Wilson's theorem to compute
\begin{align}
  \label{eqn:Tpqpfact}
  T_p((q-p)!) 
  &= T_p\left( \frac{ q! }{ q(q-1)\cdots(q-(p-1)) } \right) \notag\\*
  &\equiv \frac{(-1)^2}{(-1)^{p-1}(p-1)!} 
  \equiv -1 \pmod{p}.
\end{align}

Writing $k=p^r$ and using $q=p^2$, we use Lemma~\ref{lemma:Tppr}
and~\eqref{eqn:Tpqpfact} to compute the right-hand side
of~\eqref{eqn:TpakTpakpp}:
\begin{align*}
  \frac{ a_k }{ a_{k/p} }
  & \equiv -\frac{ T_p(p^r!) T_p(p^2!) } { T_p((p^2-p)!) T_p(p!) T_p(p^{r-1}!) } \pmod{p}\\
  & \equiv -\frac{ (-1)^r (-1)^2 } { (-1) (-1) (-1)^{r-1} }
  \equiv 1 \pmod{p}.
\end{align*}
This completes the proof that $a_k\equiv a_{k/p}\pmodintext{p}$, and
hence the proof of Theorem~\ref{theorem:radiusp2}.
\end{proof}

\begin{remark}
During the proof of Theorem~\ref{theorem:radiusp2}, we derived a
formula~\eqref{eqn:ordpBka} for the quantity $\ord_p
B_k[x^k][\bfa^\bfe]$ under the assumption that~$q=p^2$ and~$k$ is a
power of~$p$. We note that this calculation easily generalizes for
arbitrary~$q$ and~$k$. Without giving the details, we list the result
in the hope that it might be useful in future investigations, e.g., in
proving Conjecture~\ref{conjecture:xp2akval}:
\begin{multline*}
  (p-1)\ord_p B_k[x^k][\bfa^\bfe] = q - e_0 - (p-1)\ord_p(q) - S_p(k) -
  S_p(q) \\*
   + S_p(e_0) + \sum_{n=1}^{k-1} \bigl( e_n ( S_p(n) - 1 ) +  S_p(e_n) \bigr).
\end{multline*}
\end{remark}

\section{The radius of convergence of B\"ottcher coordinates}
\label{section:botradconv}

In this section we use
Theorems~\ref{theorem:compm},~\ref{theorem:fphiinRp},
and~\ref{theorem:radiusp2} to prove
Corollary~\ref{corollary:botradconv}, which describes the radius of
convergence of the B\"ottcher coordinate of various sorts of power
series.

We start with an elementary lemma that is undoubtedly well-known, but for
lack of a suitable reference and for the convenience of the reader, we
give the short proof. We set the notation
\[
  \Dcal(R) := \bigl\{ x\in\CC_p : \|x\|_p < R \bigr\}.
\]

\begin{lemma}
\label{lemma:isometry}
\begin{parts}
\Part{(a)}
Let
\[
  f(x) = x\sum_{k=0}^\infty a_kx^k\in\CC_p[\![x]\!]
  \quad\text{with $\|a_0\|_p=1$ and all\/ $\|a_k\|_p\le1$.}
\]
Then~$f(x)$ and its inverse converge and induce
an isometry
\[
  f : \Dcal(1) \xrightarrow{\;\;\sim\;\;} \Dcal(1).
\]
\Part{(b)}
Let
\[
  f(x) = x\sum_{k=0}^\infty \frac{a_kx^k}{k!}\in\CC_p[\![x]\!]
  \quad\text{with $\|a_0\|_p=1$ and all\/ $\|a_k\|_p\le1$.}
\]
Then~$f(x)$ and its inverse converge and induce an isometry
\[
  f : \Dcal(p^{-1/(p-1)}) \xrightarrow{\;\;\sim\;\;} \Dcal(p^{-1/(p-1)}).
\]
\end{parts}
\end{lemma}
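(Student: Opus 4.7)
The plan is to use the same three-step recipe for both parts: establish convergence on the claimed disk via a direct norm estimate, prove the isometry property using the ultrametric triangle inequality, and then apply the appropriate part of Proposition~\ref{proposition:finverseform} to see that $f^{-1}$ has the same form and hence gives a genuine inverse on the same disk.

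For part~(a), on $\Dcal(1)$ I would observe that $\|a_k x^{k+1}\|_p \le \|x\|_p^{k+1} \to 0$, so $f$ converges. To show $f$ is an isometry, I would use the factorization $x^{k+1}-y^{k+1} = (x-y)(x^k+x^{k-1}y+\cdots+y^k)$ to bound $\|x^{k+1}-y^{k+1}\|_p \le \|x-y\|_p \cdot \max(\|x\|_p,\|y\|_p)^k$. Since $\max(\|x\|_p,\|y\|_p) < 1$ strictly on the open unit disk, every term with $k\ge 1$ in the decomposition
\[
  f(x) - f(y) = a_0(x-y) + \sum_{k \ge 1} a_k(x^{k+1}-y^{k+1})
\]
has norm strictly smaller than $\|a_0(x-y)\|_p = \|x-y\|_p$ (using $\|a_0\|_p=1$), and the ultrametric triangle inequality then yields the exact equality $\|f(x)-f(y)\|_p = \|x-y\|_p$. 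For surjectivity, I would normalize by the unit $a_0$: the series $g(x) := a_0^{-1}f(x)$ lies in $\Pcal_1(R_p)$, so Proposition~\ref{proposition:finverseform}(a) gives $g^{-1} \in \Pcal_1(R_p)$; unwinding the identity $f^{-1}(y) = g^{-1}(a_0^{-1}y)$ shows $f^{-1}$ has the same form as $f$, and by the same convergence argument it is defined on all of $\Dcal(1)$.

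For part~(b), the only additional ingredient is control of $1/k!$, which I would supply via Legendre's formula $\ord_p(k!) = (k-S_p(k))/(p-1) \le k/(p-1)$, yielding $\|1/k!\|_p \le p^{k/(p-1)}$. This transforms the key estimate into
\[
  \|a_k x^{k+1}/k!\|_p \le \|x\|_p \cdot \bigl(p^{1/(p-1)}\|x\|_p\bigr)^k,
\]
with the analogous bound for $\|a_k(x^{k+1}-y^{k+1})/k!\|_p$. The disk radius $p^{-1/(p-1)}$ is chosen precisely so that $p^{1/(p-1)}\|x\|_p < 1$ strictly, which simultaneously gives convergence and, for the isometry, the strict domination of the $k \ge 1$ terms by the $k=0$ term $a_0(x-y)$. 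Surjectivity follows the same template, this time invoking Proposition~\ref{proposition:finverseform}(c): after normalizing by $a_0$, the inverse has the form $y\sum b_k y^k/k!$ with $b_k$ in the valuation ring, and hence obeys the same bounds on the same disk.

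I do not anticipate a serious obstacle; the most delicate point is that both the convergence check and the ultrametric isometry argument require strict inequalities, so the open disk hypotheses are essential. On the corresponding closed disk the $k\ge 1$ terms would only be bounded by $\|x-y\|_p$, losing the strict domination needed for the ultrametric triangle inequality to upgrade a bound into exact equality of norms.
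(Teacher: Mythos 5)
Your proof is correct and follows essentially the same route as the paper: expand $f(x)-f(y)$, factor out $(x-y)$, bound the $k\ge1$ terms strictly below the leading $a_0(x-y)$ term using the factorization of $x^{k+1}-y^{k+1}$ together with Legendre's formula for $\ord_p(k!)$, and invoke the ultrametric strong triangle inequality. One small technical divergence: you use the crude bound $\ord_p(k!)\le k/(p-1)$ (so strict domination comes from $p^{1/(p-1)}\max(\|x\|_p,\|y\|_p)<1$, which needs the \emph{open} disk), whereas the paper uses the exact identity $\ord_p(k!)=(k-S_p(k))/(p-1)$ and the fact $S_p(k)\ge1$ to obtain the uniform bound $p^{-S_p(k)/(p-1)}\le p^{-1/(p-1)}<1$, independent of $x,y$ — so in the paper's version the open-disk hypothesis is really only needed for \emph{convergence}, not for the isometry estimate. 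You are also more explicit than the paper on two points the paper leaves implicit: the normalization $g:=a_0^{-1}f$ needed because Proposition~\ref{proposition:finverseform} requires leading coefficient exactly $1$, and the appeal to that proposition (parts (a) resp.\ (c)) to give $f^{-1}$ the same shape and hence surjectivity of $f$ onto the disk. Both are correct and worth spelling out.
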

\begin{proof}
We give the proof of~(b), since the proof of~(a) is similar, but
easier.  The fact that~$f(x)$ converges on~$\Dcal(p^{-1/(p-1)})$ is
standard and follows from the estimate
\begin{multline*}
  \limsup_{k\to\infty} \left\| \frac{a_k}{k!} \right\|_p^{-\frac{1}{k+1}}
  \ge
  \limsup_{k\to\infty} \|k!\|_p^{-\frac{1}{k+1}} \\
  =
  \smash[t]{
    \limsup_{k\to\infty} \left(p^{-\frac{k-S_p(k)}{p-1}}\right)^{-\frac{1}{k+1}}
  =
  p^{-\frac{1}{p-1}}.}
\end{multline*}
For the last equality, we used $S_p(k)\le(p-1)\log(k)/\log(p)$, valid
for all $k\ge1$.

In order to show that~$f$ is an isometry, we note that
\[
  \frac{f(x)-f(y)}{x-y}
  =
  a_0 + \sum_{k=1}^\infty \frac{a_k}{k!}\cdot \frac{x^{k+1}-y^{k+1}}{x-y}.
\]
Since $\|a_0\|_p=1$ by assumption, we need to show that every term in
the sum has norm strictly smaller than~$1$.  We first observe that for
$k\ge0$ and (distinct) $x,y\in\Dcal$, we have
\begin{equation}
  \label{eqn:absxkykxy}
  \left\| \frac{x^{k+1}-y^{k+1}}{x-y} \right\|_p
  = \left\| \sum_{i=0}^k x^iy^{k-i} \right\|_p
  \le \max_{0\le i\le k} \|x^iy^{k-i}\|_p
  \le   p^{-\frac{k}{p-1}}.
\end{equation}
Hence using $\|a_k\|_p\le1$ and the fact that~$S_p(k)\ge1$ for
all~$k\ge1$, we find that
\begin{align*}
  \sup_{k\ge1} \left\| \frac{a_k}{k!}\cdot \frac{x^{k+1}-y^{k+1}}{x-y} \right\|_p
  &\le 
  \sup_{k\ge1} \left\| \frac{1}{k!}\right\|_p \cdot p^{-\frac{k}{p-1}}
  \quad\text{from \eqref{eqn:absxkykxy},} \\*
  &= 
  \sup_{k\ge1} p^{\frac{k-S_p(k)}{p-1}}\cdot p^{-\frac{k}{p-1}} \\*
  &=
  \sup_{k\ge1} p^{-\frac{S_p(k)}{p-1}}
  =
  p^{-\frac{1}{p-1}}
  <
  1.
\end{align*}
This concludes the proof of~(b).
\end{proof}




\begin{proof}[Proof of Corollary~$\ref{corollary:botradconv}$]
Proposition~\ref{proposition:botmprimetop} and Lemma~\ref{lemma:isometry}(a) immediately
imply the part of Corollary~\ref{corollary:botradconv}(a) with $p\nmid m$,
while Theorem~\ref{theorem:fphiinRp} and Lemma~\ref{lemma:isometry}(a) immediately
imply Corollary~\ref{corollary:botradconv}(c).

Similarly, Theorem~\ref{theorem:compm}(b) and
Lemma~\ref{lemma:isometry}(b) immediately imply
Corollary~\ref{corollary:botradconv}(b).

Next, to prove the~$p\mid m$ part of
Corollary~\ref{corollary:botradconv}(a), we use
Theorem~\ref{theorem:compm}(a) and apply Lemma~\ref{lemma:isometry}(b)
to the series~$mf_\f(x/m)$. Undoing this transformation transforms the
disk $\Dcal(p^{-1/(p-1)})$ to the disk
$\Dcal\bigl(p^{-1/(p-1)}\|m\|_p\bigr)$.

Finally, to prove~(d), we use Theorem~\ref{theorem:radiusp2} to compute
the radius of convergence of~$f_\f(x)$. Thus
\begin{align*}
  \rho(f_\f) &:= \liminf_{k\to\infty} \|a_k/k!\|_p^{-1/(k+1)} \\
  & = \liminf_{k\to\infty} p^{\frac{\ord_p(a_k) - \ord_p(k!)}{k+1}} \\*
  & = \liminf_{k\to\infty} p^{\left(\ord_p(a_k)-\frac{k-S_p(k)}{p-1}\right)\cdot\frac{1}{k+1}} \\
  & = p^{-1/(p-1)} \cdot  \liminf_{k\to\infty} p^{\ord_p(a_k)/(k+1)} \\*
  &\omit\hfill\qquad since $S_p(k)\le(p-1)\log_p(k+1)$, so $S_p(k)/(k+1)\to0$,\\
  & = p^{-1/(p-1)} \\*
  &\omit\hfill\qquad since Theorem~\ref{theorem:radiusp2} says $a_k=-1$ for infinitely many~$k$.
\end{align*}
This completes the proof of Corollary~\ref{corollary:botradconv}.
\end{proof}

\section{The B\"ottcher coordinate of $x^{p^2}+p^rx^{p^2+1}$}
\label{section:xp2conj}
This section describes a number of conjectures prompted by moderately
extensive numerical experiments.

Theorem~\ref{theorem:radiusp2} says that the $k$'th
coefficient~$a_k/k!$ of the B\"ottcher coordinate
of~$x^{p^2}+p^2x^{p^2+1}$ satisfies $a_k\equiv-1\pmodintext{p}$
whenever~$k$ is a power of~$p$.  Experiments suggest that the sequence
of mod~$p$ values of the~$a_k$ have many interesting properties.
Computations for primes $2\le p\le11$ and $0\le k\le50$ lead us to
make the following conjecture.

\begin{conjecture}
\label{conjecture:xp2akval}
Let $\f(x)=x^{p^2}+p^2 x^{p^2+1}$, and write the B\"ottcher coordinate
of~$\f$ as $f_\f(x)=x\sum_{k=0}^\infty (a_k/k!)x^k$, so
Theorem~$\ref{theorem:radiusp2}(b)$ tells us that the~$a_k$ are
integers. Then
\begin{align*}
  k\equiv 0\pmodintext{p}
  &\quad\Longrightarrow\quad
  a_k \equiv (-1)^{k/p} \pmodintext{p}, \\*
  k\equiv -2\pmodintext{p}
  &\quad\Longrightarrow\quad
  a_k \equiv -1 \pmodintext{p}, \\*
  k\equiv -1\pmodintext{p}
  &\quad\Longleftrightarrow\quad
  a_k \equiv 0 \pmodintext{p}, 
\end{align*}
with the one exception that for~$p=2$, we have $a_1\equiv1\pmodintext{2}$. 
\end{conjecture}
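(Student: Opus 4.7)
The plan is to extend the proof strategy of Theorem~\ref{theorem:radiusp2} to general $k$, proceeding by strong induction on $k$ via the recursion $a_k = A_k[x^k] - B_k[x^k] - C_k[x^k]$ together with the general valuation formula for $\ord_p B_k[x^k][\bfa^\bfe]$ displayed in the remark ending Section~\ref{section:nonint}. The argument splits into cases according to $k \bmod p$, reducing $a_k \bmod p$ to a sum over the ``dominant'' multi-indices $\bfe$ (those satisfying $\ord_p B_k[x^k][\bfa^\bfe] = 0$) plus the analogous dominant contributions coming from $C_k[x^k]$.

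The first substantive step is to classify the dominant $\bfe$'s. The penalty term $e_n(S_p(n) - 1)$ in the valuation formula forces every nonzero coordinate with $n \ge 1$ to sit at a power of $p$, and among the admissible $\bfe$'s the natural ``digit-encoding'' index $\bfe^\ast$ with $e_{p^i}^\ast = k_i$ (the $i$th base-$p$ digit of $k$) and $e_0^\ast = p^2 - S_p(k)$ turns out to be dominant precisely when $p \nmid S_p(k)$. For $k \equiv 0 \pmod{p}$, by Theorem~\ref{theorem:radiusp2} and Fermat's little theorem this index contributes $(-1)^{S_p(k)} \pmod{p}$; the parity identity $\ell \equiv S_p(\ell) \pmod{2}$ (valid when $p$ is odd because $p-1$ is even and divides $\ell - S_p(\ell)$) rewrites this as the conjectured $(-1)^{k/p}$. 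The sign of the multinomial-factorial coefficient attached to $\bfe^\ast$ would be pinned down via a Lemma~\ref{lemma:Tppr}-style calculation. One then enumerates the remaining dominant $\bfe$'s and the dominant $C_k[x^k]$-contributions; using the inductive hypothesis (which kills every monomial involving an $a_j$ with $j \equiv -1 \pmod{p}$) one shows that these extra terms cancel or merely preserve the leading value. The residue class $k \equiv -2 \pmod{p}$ is handled by an analogous, but distinct, enumeration.

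The hard part will be the case $k \equiv -1 \pmod{p}$, where the conjecture demands $a_k \equiv 0 \pmod{p}$. Here there is no single dominant term to peel off, but rather an intricate mod-$p$ cancellation among multiple dominant contributions from both $B_k[x^k]$ and $C_k[x^k]$; direct computation for small $p$ and $k$ confirms this cancellation happens but offers no obvious structural reason. The most promising approach is to construct a combinatorial involution on the set of dominant indices --- in the spirit of the $\ZZ/n\ZZ$-action used in Lemma~\ref{lemma:combin} --- pairing terms with opposite contributions. A secondary obstacle is that, unlike in Theorem~\ref{theorem:radiusp2}, the $C_k[x^k]$ term is not generically $p$-divisible and must be treated on the same footing as $B_k[x^k]$; this enlarges the combinatorial complexity and likely demands a more structural insight than naive induction alone. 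The exceptional case $p = 2$, $a_1 \equiv 1 \pmod{2}$, is isolated and should fall out of the base step of the induction by direct computation.
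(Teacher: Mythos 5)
The statement you are trying to prove is stated in the paper as a \emph{conjecture}, not a theorem: the authors offer only numerical evidence (primes $2\le p\le 11$, $0\le k\le 50$) and no proof. So there is no ``paper's proof'' to compare against, and your proposal should be judged as a standalone argument --- and as such it is not a proof but a research plan whose decisive steps are left unestablished. The central gap is your classification of the ``dominant'' multi-indices. You assert that the digit-encoding index $\bfe^\ast$ is dominant precisely when $p\nmid S_p(k)$ and that the remaining dominant contributions ``cancel or merely preserve the leading value,'' but you give no argument for either claim. The paper's own remark immediately following the conjecture shows that this is exactly where the difficulty lies: for $p=2$ the reduction $B_k[x^k]\bmod 2$ is \emph{not} a single monomial already at $k=14,22,26,28,30$; it is a sum of several monomials (e.g.\ $a_0^2(a_2a_{12}+a_4a_{10}+a_6a_8)$ for $k=14$), and the observed fact that the number of surviving monomials is odd has no known structural explanation. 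Your proposed involution on dominant indices, in the spirit of Lemma~\ref{lemma:combin}, is precisely the missing idea, and you do not construct it.

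Two further points. First, your treatment of $C_k[x^k]$ is only a placeholder: you correctly note that it is not generically $p$-divisible outside the power-of-$p$ case, but you give no analysis of its dominant terms, and without that the recursion $a_k=A_k[x^k]-B_k[x^k]-C_k[x^k]$ cannot be reduced mod $p$ at all. Second, even the sign computation you defer (``pinned down via a Lemma~\ref{lemma:Tppr}-style calculation'') is nontrivial, since the relevant multinomial coefficients no longer involve only factorials of powers of $p$; and your parity identity $(-1)^{S_p(k)}=(-1)^{k/p}$ relies on $p$ being odd, so the $p=2$ case (including the claimed exceptional behavior at $a_1$) needs a separate argument you have not supplied. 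In short, the approach is a reasonable way to attack the conjecture --- it is essentially the natural extension of the proof of Theorem~\ref{theorem:radiusp2} --- but every step beyond what that theorem already establishes is missing, and the paper's counterexamples show the most optimistic version of your step one is false.
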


\begin{remark}
Let~$B_k[x^k]$ be as in the proof of
Theorem~\ref{theorem:radiusp2}.  The key to the proof of that
theorem is that when~$k$ is a power of~$p$, the expression
for~$B_k[x^k]$ as a polynomial in~$a_0,\ldots,a_{k-1}$ contains a
single monomial whose coefficient is not divisible by~$p$. Precisely,
we proved that
\[
  - B_k[x^k] \equiv  a_0^{p^2-p} a_{k/p}^p \pmodintext{p}.
\]
One approach to the first part of Conjecture~\ref{conjecture:xp2akval}
might be to show that $B_k[x^k]\bmod p$ consists of a single monomial
under the weaker assumption that~$p\mid k$. Unfortunately, this is
not true. To illustrate, we list the first few cases for which it fails for~$p=2$:
\begin{align*}
B_{14}[x^{14}]\bmod2 &=  a_0^2 (a_2 a_{12}  +   a_4 a_{10} +  a_6 a_8),  \\*
B_{22}[x^{22}]\bmod2 &=  a_0^2 ( a_2 a_{20}  +  a_4 a_{18}  +   a_6 a_{16}),  \\
B_{26}[x^{26}]\bmod2 &=  a_0^2 (a_2 a_{24}  +   a_8 a_{18} +  a_{10} a_{16}),  \\
B_{28}[x^{28}]\bmod2 &=  a_0^2 (a_4 a_{24}  +  a_8 a_{20}  + a_{12} a_{16} ),  \\*
B_{30}[x^{30}]\bmod2 &=  a_0^2(  a_2 a_{28} + a_4 a_{26}  +  a_6 a_{24} +  a_8 a_{22}  \\*
&\omit\hfill${}+  a_{10} a_{20} +  a_{12} a_{18}  +  a_{14} a_{16} )$ . 
\end{align*}
We observe that in each case the sum consists of an odd number of
monomials, each of which is a product of even index~$a_i$, which is
consistent with the conjecture that~$B_k[x^k]$ is  odd when~$k$ is
even.
\end{remark}



We next consider the coefficients for the B\"ottcher coordinate of
\[
  \f(x) = x^{p^2} + p^r x^{p^2+1}
\]
as~$r$ increases. Not surprisingly, the integrality of the
coefficients of the B\"ottcher coordinate increases with
increasing~$r$, but we conjecture that we never achieve a B\"ottcher
coordinate whose coefficients are entirely~$p$-integral.

More precisely, experiments suggest that the valuations of the
B\"ott\-cher quantities~$a_k$ for~$p\mid k$ exhibit a great deal of
regularity.  Here is a typical example. For the function
\[
  \f(x) = x^4 + 2^8 x^5,
\]
we observe numerically that that the coefficients of the B\"ottcher coordinate $f_\f(x)=x\sum
(a_k/k!)x^k$ satisfy the following somewhat complicated recursion:
\begin{multline*}
  \ord_2(a_{2n+2})
  - \ord_2(a_{2n}) \\
  = \begin{cases}
     12 &\text{if $n\equiv0\pmodintext2$,} \\
     - 5 &\text{if $n\equiv1\pmodintext2$ and $n\not\equiv-1\pmodintext8$,} \\
     - 16 &\text{if $n\equiv-1\pmodintext8$ and $n\not\equiv-1\pmodintext{32}$.} \\
     - 21 &\text{if $n\equiv-1\pmodintext{32}$.} \\
  \end{cases}
\end{multline*}
More generally, for
\[
  \f(x) = x^4 + 2^{2+r} x^5\quad\text{with $r\ge0$,}
\]
we find (numerically) that there are recursions for even values of~$k$
having the form
\begin{align*}
  \ord_2(a_{k+2^{r+1}}) &= \ord_2(a_{k}) + 2^{r+1}-2 &&\text{if $r$ is odd,} \\*
  \ord_2(a_{k+2^r}) &= \ord_2(a_{k}) + 2^r-1 &&\text{if $r$ is even.} 
\end{align*}

Based on these and other computations, we make the following conjecture.

\begin{conjecture}
Let
\[
  \f(x) = x^{p^2} + p^{2+r} x^{p^2+1}\quad\text{with $r\ge0$,}
\]
and let $f_\f(x)=x\sum (a_k/k!)x^k$ be the associated B\"ottcher
coordinate.%
\begin{parts}
  \Part{(a)}
  For $k\equiv0\pmod{p}$ we have
  \[
  \ord_p(a_k) = \frac{1-p^{-r}}{p-1}k+O(1)\quad\text{as $k\to\infty$.}
  \]
  \Part{(b)}
  The radius of convergence of~$f_\f$ is
  \[
  \rho(f_\f) := \liminf_{k\to\infty} \|a_k/k!\|_p^{-1/k} = p^{-p^{-r}/(p-1)}.
  \]
\end{parts}
\end{conjecture}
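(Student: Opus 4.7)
The plan is to adapt the analysis used for Theorem~\ref{theorem:radiusp2} from $r=0$ to general $r\ge 0$. Writing $\f(x)=x^{p^2}(1+p^{r+2}x)$ and expanding the B\"ottcher equation as in Section~\ref{section:nonint}, one obtains a recursion of the shape
\[
  a_k = A_k[x^k] - B_k[x^k] - p^r C_k[x^k],
\]
where $A_k$, $B_k$, $C_k$ are defined exactly as in the proof of Theorem~\ref{theorem:radiusp2}; the only change from the $r=0$ case is the extra factor $p^r$ on $C_k$, arising from the $x^{p^2+1}$-coefficient $p^{r+2}$ of $\f$.

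For part~(a), I would proceed by induction on $k$ among indices with $p\mid k$, with inductive hypothesis
\[
  \ord_p(a_{k'}) = \frac{1-p^{-r}}{p-1}\,k' + O(1)
  \quad\text{for all $k'<k$ with $p\mid k'$.}
\]
The key step is to isolate the dominant monomial in the recursion. From the proof of Theorem~\ref{theorem:radiusp2}, the leading monomial of $B_k$ is $a_0^{q-p}a_{k/p}^p$, whose coefficient has $p$-adic valuation $p(S_p(k/p)-1)/(p-1)$. Combined with the inductive hypothesis applied to $a_{k/p}$ (in the case $p^2\mid k$), this yields the predicted contribution $\tfrac{1-p^{-r}}{p-1}\,k + O(1)$ to $\ord_p(a_k)$. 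One must then verify that all remaining monomials of $B_k$, every monomial of $A_k$, and every monomial of $p^r C_k$ (where the extra factor $p^r$ plainly helps) contribute strictly larger $p$-adic valuation; the generalized valuation formula displayed in the final remark of Section~\ref{section:nonint} is well suited to this comparison.

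The main obstacles I anticipate are twofold. First, when $p\mid k$ but $p^2\nmid k$, the index $k/p$ in the leading monomial is not divisible by $p$, so the inductive hypothesis does not directly apply; one would need parallel bounds on $\ord_p(a_n)$ for $p\nmid n$, which the mod-$p$ assertions of Conjecture~\ref{conjecture:xp2akval} suggest exist (e.g., $\ord_p(a_n)=0$ when $n\equiv-2\pmod p$, and $\ord_p(a_n)>0$ when $n\equiv-1\pmod p$). Second, proving the error to be genuinely $O(1)$ rather than $O(\log k)$ coming from the $S_p$-type corrections appears to demand that contributions from many monomials systematically telescope or stabilize, as suggested by the exact recursions recorded empirically for small $p$ in Section~\ref{section:xp2conj}. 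This second issue seems to be the hard part of the argument.

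Once~(a) is in hand, part~(b) follows by a direct computation: using Legendre's formula and $S_p(k)/k\to 0$, along the subsequence $p\mid k$ one has
\[
  \liminf_{k\to\infty}\frac{\ord_p(a_k)-\ord_p(k!)}{k}
  = \frac{1-p^{-r}}{p-1}-\frac{1}{p-1} = -\frac{p^{-r}}{p-1},
\]
which gives $\rho(f_\f)=p^{-p^{-r}/(p-1)}$, provided (as the numerics indicate) that no residue class of $k$ other than $k\equiv 0\pmod p$ produces a smaller value of $(\ord_p(a_k)-\ord_p(k!))/k$.
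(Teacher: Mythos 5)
This statement is stated in the paper as a \emph{conjecture}, supported only by numerical experiments; the paper offers no proof, and indeed explicitly remarks that part~(b) does not even follow from part~(a). So there is nothing in the paper to compare your argument against, and what you have written is a research plan rather than a proof. Evaluated on its own terms, the plan has a fatal gap at its central step. You propose to ``isolate the dominant monomial'' $a_0^{q-p}a_{k/p}^p$ in $B_k[x^k]$ and show every other monomial has strictly larger valuation. The paper's remark following the proof of Theorem~\ref{theorem:radiusp2} shows this is false for general $k$ with $p\mid k$: already for $p=2$ one has, e.g., $B_{14}[x^{14}]\equiv a_0^2(a_2a_{12}+a_4a_{10}+a_6a_8)\pmod 2$, so several monomials of $B_k[x^k]$ have unit coefficient and one must control a sum of such terms, not a single one. (There is also a computational slip: by the generalized valuation formula in that remark, the coefficient of $a_0^{q-p}a_{k/p}^p$ has valuation $S_p(k/p)-1$, not $p(S_p(k/p)-1)/(p-1)$; this vanishes only when $k$ is a power of $p$, which is precisely why the paper's argument works there and does not extend.)

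Beyond that, the two obstacles you flag yourself are genuine and unresolved: the induction does not close when $p\mid k$ but $p^2\nmid k$, since no bound on $\ord_p(a_n)$ for $p\nmid n$ is established (the mod-$p$ statements of Conjecture~\ref{conjecture:xp2akval} are themselves unproven); and nothing in the sketch controls the error term to $O(1)$ rather than $O(\log k)$, which matters because the $S_p$-corrections grow without bound. Finally, your deduction of~(b) from~(a) rests on the unverified proviso that no residue class of $k$ other than $k\equiv0\pmodintext{p}$ yields a smaller value of $(\ord_p(a_k)-\ord_p(k!))/k$ --- this is exactly the gap the authors point to when they note that~(b) does not follow from~(a). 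In short, the conjecture remains open, and your proposal does not close it.
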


We remark that~(b) does not follow from~(a). Indeed, applying~(a)
with~$k$ a power of~$p$ only shows that the radius of convergence is
bounded above by the quantity given by~(b).


\begin{acknowledgement}
The authors would like to thank Rob Benedetto, Patrick Ingram, Holly
Krieger, Jonathan Lubin, and Mike Zieve for their helpful advice, and
Melody Chan for her insight into the proof of Lemma~\ref{lemma:combin}.
\end{acknowledgement}



\def\cprime{$'$}

\end{document}